\PassOptionsToPackage{unicode,pdfusetitle}{hyperref}
\PassOptionsToPackage{hyphens}{url}
\PassOptionsToPackage{dvipsnames,svgnames,x11names}{xcolor}

\documentclass[twoside]{article}

\usepackage[accepted]{aistats2023}
% If your paper is accepted, change the options for the package
% aistats2023 as follows:
%
%\usepackage[accepted]{aistats2023}
%
% This option will print headings for the title of your paper and
% headings for the authors names, plus a copyright note at the end of
% the first column of the first page.

% If you set papersize explicitly, activate the following three lines:
%\special{papersize = 8.5in, 11in}
%\setlength{\pdfpageheight}{11in}
%\setlength{\pdfpagewidth}{8.5in}

% If you use natbib package, activate the following three lines:
%\usepackage[round]{natbib}
%\renewcommand{\bibname}{References}
%\renewcommand{\bibsection}{\subsubsection*{\bibname}}

% If you use BibTeX in apalike style, activate the following line:
%\bibliographystyle{apalike}

%%%%%%%%%%%%%%%%% OUR PREAMBLE  %%%%%%%%%%%%%%%%

\usepackage{lmodern}
\usepackage{amssymb,amsmath,amsthm,mathtools,isomath}

\usepackage[T1]{fontenc}
\usepackage[utf8]{inputenc}
\usepackage{textcomp} % provide euro and other symbols
\usepackage[english]{babel}

\usepackage{upquote} % straight quotes in verbatim environments
\usepackage{nicefrac}	% compact symbols for 1/2, etc.
\usepackage[]{microtype}
\UseMicrotypeSet[protrusion]{basicmath} % disable protrusion for tt fonts

\usepackage{stmaryrd}
\usepackage{xcolor}
\usepackage{xurl}
\usepackage{bookmark}
\usepackage{csquotes}
\usepackage{siunitx}
\usepackage{enumitem}

\usepackage{algorithm,algpseudocode}
\usepackage[titlenumbered,linesnumbered,ruled,noend,algo2e]{algorithm2e}

\usepackage[textsize=scriptsize]{todonotes}

\usepackage[noabbrev,nameinlink]{cleveref}

%%%% problem environment
\usepackage{aliascnt}
\newaliascnt{problem}{equation}
\aliascntresetthe{problem}
\creflabelformat{problem}{#2\textup{(#1)}#3}
\makeatletter

\def\endproblem{\eqno \hbox{\@eqnnum}$$\@ignoretrue}
\makeatother
\Crefname{problem}{Problem}{Problems}
%%%%%%%%%%%%%%%%%

\usepackage{hyperref}
\hypersetup{
  colorlinks = true,
  linkcolor  = RoyalBlue4,
  filecolor  = RoyalBlue4,
  citecolor  = VioletRed4,
  urlcolor   = RoyalBlue4
}

\usepackage{longtable}
\usepackage{booktabs}

\usepackage{etoolbox}

% Allow footnotes in longtable head/foot
\usepackage{footnotehyper}
\makesavenoteenv{longtable}

\usepackage{graphicx}
\graphicspath{{figures/}}
\usepackage{subcaption}

% bibliography
\usepackage[style=authoryear,language=english,uniquelist=false,maxbibnames=10,minbibnames=5,uniquename=false,doi=false]{biblatex}
\addbibresource{slopecd.bib}

\setlength{\emergencystretch}{3em} % prevent overfull lines

% operators

\DeclareMathOperator*{\argmin}{arg\,min}

\DeclareMathOperator{\sign}{sign}

\DeclareMathOperator{\prox}{prox}
\DeclareMathOperator{\Span}{span}
\DeclareMathOperator{\cumsum}{cumsum}
\DeclareMathOperator{\parset}{par}

% delimiters

% macros

\newcommand{\norm}[1]{\lVert {#1} \rVert}
\newcommand{\bbR}{\mathbb{R}}
\newcommand{\cB}{\mathcal{B}}
\newcommand{\cC}{\mathcal{C}}
\newcommand{\cG}{\mathcal{G}}
\newcommand{\cM}{\mathcal{M}}
\newcommand{\pkg}[1]{\textsf{#1}}
\newcommand{\dataset}[1]{\texttt{#1}}

% theorems
\theoremstyle{plain}
\newtheorem{theorem}{Theorem}[section]
\newtheorem{proposition}[theorem]{Proposition}
\newtheorem{lemma}[theorem]{Lemma}

\theoremstyle{definition}
\newtheorem{definition}[theorem]{Definition}

\theoremstyle{remark}
\newtheorem{remark}[theorem]{Remark}

%%%%%%%%%%%%%%%%%%%%%%%%%%%%%%%%%%%%%%%%%%%%%%%%%

\begin{document}

% If your paper is accepted and the title of your paper is very long,
% the style will print as headings an error message. Use the following
% command to supply a shorter title of your paper so that it can be
% used as headings.
%
%\runningtitle{I use this title instead because the last one was very long}

% If your paper is accepted and the number of authors is large, the
% style will print as headings an error message. Use the following
% command to supply a shorter version of the authors names so that
% they can be used as headings (for example, use only the surnames)
%
\runningauthor{Johan Larsson, Quentin Klopfenstein, Mathurin Massias, Jonas Wallin}

\twocolumn[%
  \aistatstitle{Coordinate Descent for SLOPE}
  \aistatsauthor{Johan Larsson \And Quentin Klopfenstein}
  \aistatsaddress{
    Department of Statistics\\Lund University, Sweden\\
    \href{mailto:johan.larsson@stat.lu.se}{\url{johan.larsson@stat.lu.se}}
    \And
    Luxembourg Centre for Systems Biomedicine\\ University of Luxembourg, Luxembourg \\
    \href{mailto:quentin.klopfenstein@uni.lu}{\url{quentin.klopfenstein@uni.lu}}
  }
  \aistatsauthor{Mathurin Massias \And Jonas Wallin}
  \aistatsaddress{
    Univ. Lyon, Inria, CNRS, ENS de Lyon,
    \\
    UCB Lyon 1, LIP UMR 5668, F-69342 \\
    Lyon, France \\
    \href{mailto:mathurin.massias@inria.fr}{\url{mathurin.massias@inria.fr}}
    \And
    Department of Statistics\\Lund University, Sweden\\
    \href{mailto:jonas.wallin@stat.lu.se}{\url{jonas.wallin@stat.lu.se}}
  }
]

\begin{abstract}
  %!TEX root=./main.tex
% Penalized regression is a core element of modern statistical learning.
The lasso is the most famous sparse regression and feature selection method. %allowing for feature selection. %due to its sparsity.  %solution induced by the $\ell_1$ penalty.
One reason for its popularity is the speed at which the underlying optimization problem can be solved. %, state-of-the-art solvers relying on coordinate descent algorithm.
Sorted L-One Penalized Estimation (SLOPE) is a generalization of the lasso with appealing statistical properties.
In spite of this, the method has not yet reached widespread interest.
A major reason for this is that current software packages that fit SLOPE rely on algorithms that perform poorly in high dimensions. %is due to slow solvers of the underlying optimization problem in large dimension.
%Despite having better statistical properties, the Sorted L-One Penalized Estimation (SLOPE), a generalization of the lasso, has not yet reached a wide interest.
%This is mostly due to the time required to solve the underlying optimization problem in large dimension.
To tackle this issue, we propose a new fast algorithm to solve the SLOPE optimization problem,
% Despite the non-separability of the penalty, we propose a hybrid method
which combines proximal gradient descent and proximal coordinate descent steps.
We provide new results on the directional derivative of the SLOPE penalty and its related SLOPE thresholding operator, as well as provide convergence guarantees for our proposed solver.
In extensive benchmarks on simulated and real data, we show that our method outperforms a long list of competing algorithms.

\end{abstract}

%!TEX root = ./main.tex
\section{INTRODUCTION}\label{sec:introduction}
%%%%%%%%%%%%%%%%%%%%%%%%%%%%%%%%%%%%%%%%%%%%%%

In this paper we present a novel numerical algorithm for Sorted L-One Penalized
Estimation (SLOPE, \cite{bogdan2013,bogdan2015,zeng2014ordered}), which, for a
design matrix \(X \in \mathbb{R}^{n \times p}\) and response vector \(y \in \mathbb{R}^n\), is defined as
\begin{problem}\label{pb:slope}
  \min_{\beta \in \mathbb{R}^p}
  P(\beta) =  \frac{1}{2} \norm{y - X \beta}^2 + J(\beta)
\end{problem}
where
\begin{equation}
  \label{eq:sorted-l1-norm}
  J(\beta) = \sum_{j=1}^p \lambda_j|\beta_{(j)}|
\end{equation}
is the \emph{sorted \(\ell_1\) norm}, defined through
\begin{equation}
  |\beta_{(1)}| \geq |\beta_{(2)}| \geq \cdots \geq |\beta_{(p)}| \enspace,
\end{equation}
with \(\lambda\) being a fixed non-increasing and non-negative sequence.

The sorted $\ell_1$ norm is a sparsity-enforcing penalty that has become
increasingly popular due to several appealing properties, such as its ability
to control false discovery rate~\parencite{bogdan2015,kos2020}, cluster
coefficients~\parencite{figueiredo2016, schneider2020a}, and recover sparsity and
ordering patterns in the solution~\parencite{bogdan2022}. Unlike other competing
sparse regularization methods such as MCP~\parencite{zhang2010} and
SCAD~\parencite{fan2001}, SLOPE has the advantage of being a convex problem~\parencite{bogdan2015}.

In spite of the availability of predictor screening
rules~\parencite{larsson2020c,elvira2022}, which help speed up SLOPE in the
high-dimensional regime, current state-of-the-art algorithms for SLOPE perform
poorly in comparison to those of more established penalization methods such as
the lasso (\(\ell_1\) norm regularization) and ridge regression
(\(\ell_2\) norm regularization).
As a small illustration of this issue, we compared the speed at which the \pkg{SLOPE}~\parencite{larsson2022d} and \pkg{glmnet}~\parencite{friedman2022} packages solve a SLOPE and lasso problem, respectively, for the \dataset{bcTCGA} data set.
\pkg{SLOPE} takes 43 seconds to reach convergence, whilst \pkg{glmnet} requires only 0.14 seconds\footnote{See~\Cref{sec:slope-vs-glmnet} for details on this experiment.}.
This lackluster performance has hampered the applicability of SLOPE to many real-world applications.
In this paper we present a remedy for this issue, by presenting an algorithm that reaches convergence in only 2.9 seconds on the same problem\footnote{Note that we do not use any screening rule in the current implementation of our algorithm, unlike the \pkg{SLOPE} package, which uses the strong screening rule for SLOPE~\parencite{larsson2020c}.}.

A major reason for why algorithms for solving
$\ell_1$-, MCP-, or SCAD-regularized problems enjoy better performance is that
they use coordinate
descent~\parencite{tseng2001convergence,friedman2010,breheny2011}. Current SLOPE
solvers, on the other hand, rely on proximal gradient descent algorithms such
as FISTA~\parencite{beck2009} and the alternating direction method of multipliers
method (ADMM, \cite{boyd2010}), which have proven to be less efficient than
coordinate descent in empirical benchmarks on related problems, such as the
lasso~\parencite{moreau2022benchopt}.
In addition to FISTA and ADMM, there has also been research into Newton-based augmented Lagrangian methods to solve SLOPE~\parencite{Ziyan2019}.
But this method is adapted only to the \(p \gg n\) regime and, as we show in our paper, is outperformed by our method even in this scenario.
Applying coordinate descent to SLOPE is not,
however, straightforward since convergence guarantees for coordinate descent
require the non-smooth part of the objective to be separable, which is not the case for SLOPE. As a
result, naive coordinate descent schemes can get
stuck~(\Cref{fig:naive-cd-stuck}).

\begin{figure}[htb]
  \centering
  \includegraphics[]{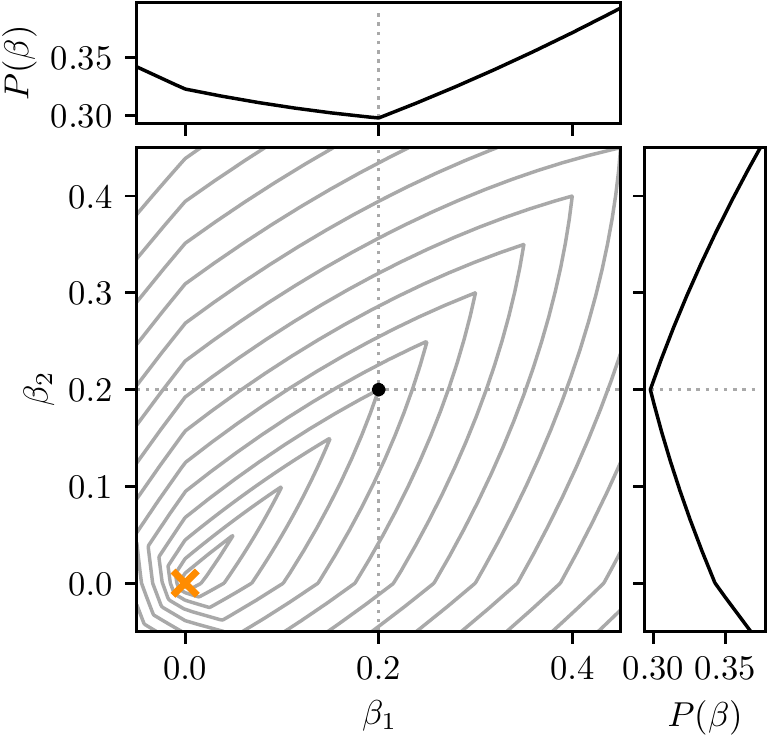}
  \caption{%
  An example of standard coordinate descent getting stuck on a two-dimensional SLOPE problem.
  The main plot shows level curves for the primal objective~\eqref{pb:slope}, with the minimizer \(\beta^* = [0, 0]^T\) indicated by the orange cross.
  The marginal plots display objective values at \(\beta_1 = 0.2\) when optimizing over \(\beta_2\) and vice versa.
  At \(\beta = [0.2,0.2]^T\), standard coordinate descent can only move in the directions indicated by the dashed lines---neither of which are descent directions for the objective.
  As a result, the algorithm is stuck at a suboptimal point.
  }
  \label{fig:naive-cd-stuck}
\end{figure}

In this article we address this problem by introducing a new, highly effective
algorithm for SLOPE based on a hybrid proximal gradient and coordinate descent
scheme. Our method features convergence guarantees and reduces the time
required to fit SLOPE by orders of magnitude in our empirical experiments.

\paragraph{Notation}\label{sec:notation}

Let \((i)^{-}\) be the inverse of \((i)\) such that
\(\big((i)^-\big)^- = (i)\); see \Cref{tab:permutation-example} for an
example of this operator for a particular \(\beta\).
\begin{table}[bt]
  \centering
  \caption{Example of the permutation operator \((i)\) and its inverse
    \((i)^-\) for $\beta = [0.5, -5, 4]^T$}
    \label{tab:permutation-example}
  \begin{tabular}{cS[table-format=-1.1,round-mode=off]cc}
    \toprule
    \(i\) & {\(\beta_i\)} & \((i)\) & \((i)^-\) \\
    \midrule
    1     & 0.5         & 2       & 3         \\
    2     & -5          & 3       & 1         \\
    3     & 4           & 1       & 2         \\
    \bottomrule
  \end{tabular}
\end{table}
This means that
\[
  J(\beta) = \sum_{j=1}^p \lambda_j |\beta_{(j)}|
  = \sum_{j=1}^p \lambda_{(j)^-}|\beta_j| \,.
\]

Sorted $\ell_1$ norm penalization leads to solution vectors with clustered coefficients in which the absolute values of several coefficients are set to exactly the same value.
To this end, for a fixed $\beta$ such that $|\beta_j|$ takes $m$ distinct values, we introduce \(\mathcal{C}_1, \mathcal{C}_2 , \dots, \mathcal{C}_m\) and \(c_1,
c_2, \dots, c_m\) for the indices and coefficients respectively of the \(m\)
clusters of $\beta$, such that
$\mathcal{C}_i = \{j : |\beta_j| = c_i\}$ and $c_1 > c_2 > \cdots > c_m \geq 0.$
For a set $\cC$, let \(\bar{\mathcal{C}}\) denote its complement.
Furthermore, let $(e_i)_{i \in [d]}$ denote the canonical basis of $\bbR^d$, with \([d] = \{1,2,\dots,d\}\).
Let $X_{i:}$ and $X_{:i}$ denote the $i$-th row and column, respectively, of the matrix $X$.
Finally, let $\sign(x) = x / |x|$ (with the convention 0/0 = 1) be the scalar sign, that acts entrywise on vectors.

%!TEX root = ./main.tex

\section{COORDINATE DESCENT FOR SLOPE}\label{sec:theory}
%%%%%%%%%%%%%%%%%%%%%%%%%%%%%%%%%%%%
% \subsection{Motivation and Overview}%
% \label{sec:overview}

Proximal coordinate descent cannot be applied to \Cref{pb:slope} because the non-smooth term is not separable.
If the clusters $\mathcal{C}_1^*, \ldots, \mathcal{C}_{m^*}^*$ and signs of the solution $\beta^*$ were known, however, then the values $c_1^*, \ldots, c_{m^*}^*$ taken by the clusters of $\beta^*$ could be computed by solving
\begin{problem}\label{pb:separable_slope}
\begin{multlined}
  \min_{z \in \bbR^{m^*}}\bigg(
  \frac{1}{2} \Big\lVert y - X \sum_{i=1}^{m^*} \sum_{j \in \mathcal{C}_i^*} z_i \sign(\beta_j^*) e_j \Big\rVert^2 \\
  + \sum_{i=1}^{m^*} | z_i | \sum_{j \in \mathcal{C}_i^*} \lambda_j
  \bigg).
\end{multlined}
\end{problem}
% \klopfe{Clarify what $F$ is here}
% MM: removed F and used the form (ell(beta) instead of F(Xbeta)) : more general
% where $\tilde{x}_{\mathcal{C}_i} = \sum_{j \in \mathcal{C}_i} x_j \sign (\beta_j^*)$.
Conditionally on the knowledge of the clusters and the signs of the coefficients, the penalty becomes separable~\parencite{dupuis2021}, which means that coordinate descent could be used.

Based on this idea, we derive a coordinate descent update for minimizing the SLOPE problem~\eqref{pb:slope} with respect to the coefficients of a single cluster at a time~(\Cref{sec:cd-update}).
Because this update is limited to updating and, possibly, merging clusters, we intertwine it with proximal gradient descent in order to correctly identify the clusters~(\Cref{sec:pgd-update}).
In \Cref{sec:hybrid-strategy}, we present this hybrid strategy and show that is guaranteed to converge.
In \Cref{sec:experiments}, we show empirically that our algorithm outperforms competing alternatives for a wide range of problems.

\subsection{Coordinate Descent Update}
\label{sec:cd-update}

In the sequel, let $\beta$ be fixed with $m$ clusters $\mathcal{C}_1, \ldots, \mathcal{C}_m$ corresponding to values $c_1, \ldots, c_m$.
In addition, let $k \in [m]$ be fixed and $s_k = \sign \beta_{\mathcal{C}_k}$.
We are interested in updating $\beta$ by changing only the value taken on the $k$-th cluster.
To this end, we define $\beta(z) \in \bbR^p$ by:
\begin{equation}
  \label{eq:coordinate-update-beta}
  \beta_i(z) =
  \begin{cases}
    \mathrm{sign}(\beta_i) z   \, , & \text{if } i \in \mathcal{C}_k \, , \\
    \beta_i \, ,                    & \text{otherwise} \, .
  \end{cases}
\end{equation}
% This means that \(\beta(z)\) corresponds to a version of \(\beta\) with a
% coordinate update for the \(k\)-th cluster.
Minimizing the objective in this direction amounts to solving the following
one-dimensional problem:
\begin{problem}
  \label{pb:cluster-problem}
  \min_{z \in \mathbb{R}} \Big(
  G(z) = P(\beta(z))  = \frac{1}{2} \norm{y - X \beta(z)}^2 + H(z)
  \Big) \,  ,
\end{problem}
where
\begin{equation}
  H(z) = |z| \sum_{j \in \mathcal{C}_k} \lambda_{(j)^-_z}
  + \sum_{j \notin \mathcal{C}_k} |\beta_j| \lambda_{(j)^-_z}
\end{equation}
is the \emph{partial sorted \(\ell_1\) norm} with respect to the \(k\)-th cluster and where we write \(\lambda_{(j)^-_z}\) to indicate that the inverse sorting permutation \((j)^-_z\)
is defined with respect to \(\beta(z)\).
The optimality condition for \Cref{pb:cluster-problem} is
\[
  % P_k'(z; \delta) \geq 0,
  \forall \delta \in \{-1, 1\}, \quad G'(z; \delta) \geq 0,
\]
where $G'(z; \delta) $ is the directional derivative of $G$ in the direction $\delta$.
Since the first part of the objective is differentiable, we have
\[
  G'(z; \delta)  = \delta \sum_{j \in \mathcal{C}_k} X_{:j}^\top(X\beta(z) - y) + H'(z; \delta) \, ,
\]
where \(H'(z; \delta)\) is the directional derivative of $H$.

Throughout the rest of this section we derive the solution to \eqref{pb:cluster-problem}.
To do so, we will introduce the directional derivative for the
sorted \(\ell_1\) norm with respect to the coefficient of the \(k\)-th cluster.
First, as illustrated on \Cref{fig:partial_slope}, note that $H$ is piecewise affine, with breakpoints at 0 and all $\pm c_i$'s for $i \neq k$.
Hence, the partial derivative is piecewise constant, with jumps at these points; in addition, $H'(\cdot; 1) = H'(\cdot, -1)$ except at these points.

\begin{figure}[htbp]
  \centering
  \includegraphics[width=\linewidth]{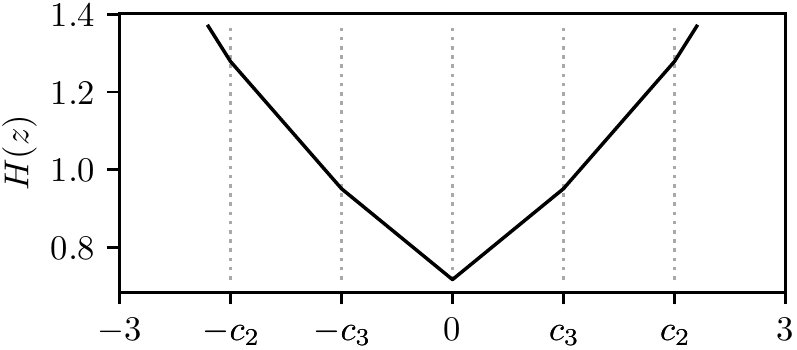}
  \caption{Graph of the partial sorted $\ell_1$ norm with \(\beta = [-3, 1, 3, 2]^T\), \(k = 1\), and so $c_1, c_2, c_3 = (3, 2, 1)$.}
  \label{fig:partial_slope}
\end{figure}

Let \(C(z)\) be the function that returns the cluster of $\beta(z)$ corresponding to \(|z|\), that is
\begin{equation}
  C(z) = \{j : |\beta(z)_j| = |z|\} \,.
\end{equation}

\begin{remark}\label{rem:permutation_C_z}
  Note that if $z$ is equal to some $c_i$, then $C(z) = \mathcal{C}_i \cup \mathcal{C}_k$, and otherwise $C(z) = \mathcal{C}_k$.
  Related to the piecewise affineness of $H$ is the fact that the permutation\footnote{the permutation is in fact not unique, without impact on our results. This is discussed when needed in the proofs.} corresponding to $\beta(z)$ is
  % \mm{there are multiple possible permutations in fact, this matters when $|z| = c_i$}
  \begin{equation*}
    \begin{cases}
      \cC_k, \cC_m, \ldots, C_1
       & \text{ if } z \in \left]0, c_m\right[ \, ,                                                                     \\
      \cC_m, \ldots ,\cC_i, \cC_k, \cC_{i-1}, \ldots, C_1
       & \splitfrac{\text{ if } z \in \left]c_{i}, c_{i-1} \right[}{\text{ and } i \in \llbracket 2 , m \rrbracket\, ,} \\
      \cC_m, \ldots C_1,  \cC_k
       & \text{ if } z \in \left]c_1, +\infty \right[ \, ,                                                              \\
    \end{cases}
  \end{equation*}
  and that this permutation also reorders $\beta(z \pm h)$ for $z \neq c_i \; (i \neq k)$ and $h$ small enough.
  The only change in permutation happens when $z = 0$ or $z = c_i \; (i \neq k)$.
  Finally, the permutations differ between $\beta(z + h)$ and $\beta(z - h)$ for arbitrarily small $h$ if and only if $z = c_i \neq 0$.
\end{remark}

We can now state the directional derivative of $H$. % \(J_k\).

\begin{theorem}\label{thm:sl1-directional-derivative}
  Let \(c^{\setminus k}\) be the set containing all elements of $c$ except the $k$-th one: $c^{\setminus k} =  \{c_1, \ldots c_{k-1}, c_{k+1}, \ldots, c_m \}$.
  % Let $\varepsilon_c$ be an arbitrary positive value such that
  Let $\varepsilon_c > 0$ such that
  \begin{equation}
    \label{eq:epsilon-c}
    \varepsilon_c < \big| c_i - c_j\big| , \quad \forall\, i \neq j \text{ and } \varepsilon_c < c_m \text{ if } c_m \neq 0 \, .
  \end{equation}
  The directional derivative of the partial sorted $\ell_1$ norm with respect to the $k$-th cluster, \(H\), in the direction \(\delta\) is
  \[
    H'(z; \delta) =
    \begin{cases}
      \smashoperator[r]{\sum_{j \in C(\varepsilon_c )}} \lambda_{(j)^-_{\varepsilon_c }}
       & \text{if } z = 0 \, ,                                \\
      \sign(z)\delta\smashoperator{\sum_{j \in C(z + {\varepsilon_c} \delta)}} \lambda_{(j)^-_{z + {\varepsilon_c}\delta}}
      %  & \text{if } |z| = c^{(k)}_i > 0 \, , \\
       & \text{if } |z| \in c^{\setminus k} \setminus \{0\} , \\
      \sign(z)\delta\smashoperator{\sum_{j \in C(z)}} \lambda_{(j)^-_{z}}
       & \text{otherwise} \, .
    \end{cases}
  \]
\end{theorem}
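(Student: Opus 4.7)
The plan is to exploit the piecewise affine structure of $H$ exposed by \Cref{rem:permutation_C_z}: on every open interval of $z$ that avoids $\{0\} \cup \{\pm c_i : i \neq k\}$, the sorting permutation of $\beta(z)$ is constant, so each factor $\lambda_{(j)^-_z}$ is locally constant and
\[
H(z) = |z| \sum_{j \in \mathcal{C}_k} \lambda_{(j)^-_z} + \sum_{j \notin \mathcal{C}_k} |\beta_j|\, \lambda_{(j)^-_z}
\]
is affine in $|z|$. The three cases of the theorem correspond exactly to the interior of such a piece, a nonzero breakpoint $|z| = c_i$ ($i \neq k$), and the origin.

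For the interior case, $H$ is differentiable at $z$ with ordinary derivative $\sign(z) \sum_{j \in \mathcal{C}_k} \lambda_{(j)^-_z}$, so the directional derivative in direction $\delta \in \{-1, +1\}$ is $\sign(z)\, \delta \sum_{j \in \mathcal{C}_k} \lambda_{(j)^-_z}$; since $C(z) = \mathcal{C}_k$ in this regime by \Cref{rem:permutation_C_z}, this matches the stated formula. For a nonzero breakpoint $|z| = c_i$, the one-sided derivative in direction $\delta$ equals $\delta$ times the slope of $H$ on the piece reached by moving $\varepsilon_c \delta$ from $z$ (using continuity of $H$ and affineness on that piece). Applying the interior-case computation at $z + \varepsilon_c \delta$ yields the slope $\sign(z + \varepsilon_c \delta) \sum_{j \in C(z + \varepsilon_c \delta)} \lambda_{(j)^-_{z + \varepsilon_c \delta}}$, and since $\varepsilon_c$ is small enough that $\sign(z + \varepsilon_c \delta) = \sign(z)$, this recovers the theorem's formula.

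For $z = 0$, I use the symmetry $H(z) = H(-z)$ and compute the right-derivative:
\[
H'(0;\delta) = \lim_{h \to 0^+} \frac{H(h\delta) - H(0)}{h} = \sum_{j \in \mathcal{C}_k} \lambda_{(j)^-_{\varepsilon_c}} + \lim_{h \to 0^+}\frac{1}{h} \sum_{j \notin \mathcal{C}_k} |\beta_j|\bigl(\lambda_{(j)^-_{h\delta}} - \lambda_{(j)^-_0}\bigr),
\]
where the first sum uses that the permutation is constant on $(0, \varepsilon_c)$. The main obstacle is to show that the residual sum vanishes, which requires care because the permutation at $z = 0$ is not unique. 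I would split into subcases: if $c_m > 0$, the permutation on $\bar{\mathcal{C}}_k$ is unchanged between $z = 0$ and $z = h\delta$, so each difference is zero; if $c_m = 0$, the ranks of $j$ in clusters $\mathcal{C}_l$ with $l < m$, $l \neq k$ are still unchanged, while ranks of $j \in \mathcal{C}_m$ may shift but are multiplied by $|\beta_j| = 0$. Finally, $C(\varepsilon_c) = \mathcal{C}_k$ by \Cref{rem:permutation_C_z}, completing the identification with the stated formula.
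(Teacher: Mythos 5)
Your proof is correct and follows essentially the same route as the paper's: both rest on the local constancy of the sorting permutation (\Cref{rem:permutation_C_z}) and the same three-way case split on $z$, with the breakpoint and $z=0$ cases resolved by evaluating the slope of the adjacent affine piece reached after a step of size $\varepsilon_c\delta$. Your explicit argument that the residual sum over $j\notin\cC_k$ vanishes at $z=0$ (separating the $c_m>0$ and $c_m=0$ subcases) is, if anything, slightly more careful than the paper's own treatment.
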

The proof is in \Cref{app:proof_directional_derivative}; in \Cref{fig:directional-derivative}, we show an example of the directional
derivative and the objective function.

\begin{figure}[htb]
  \centering
  \includegraphics[]{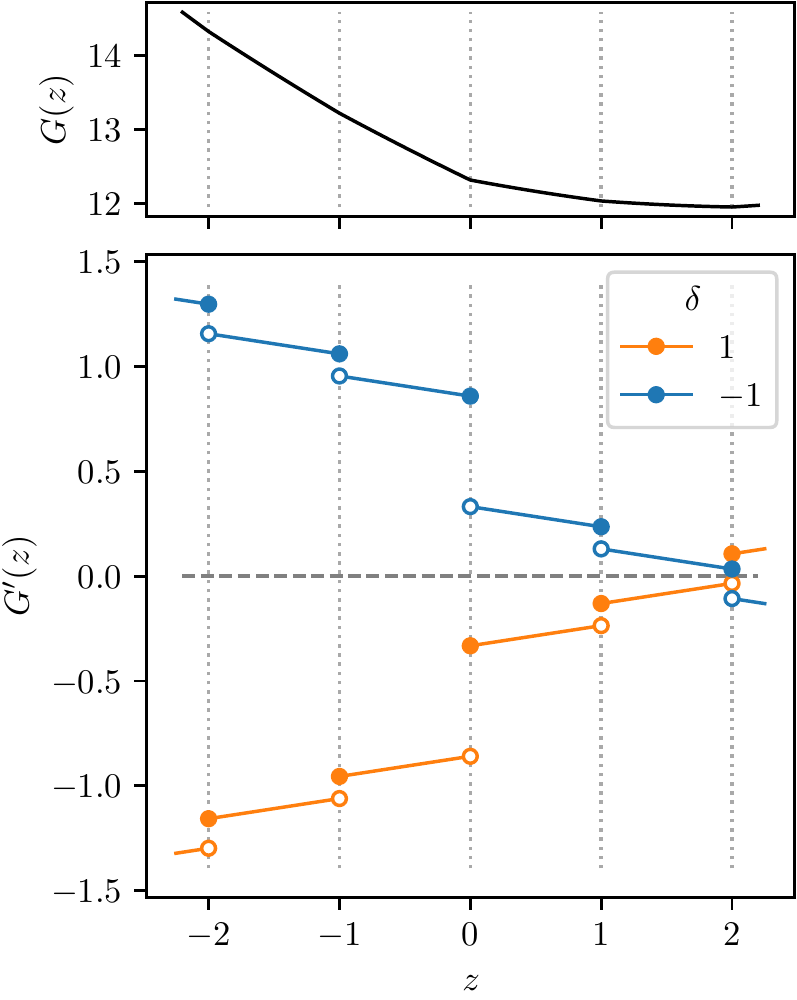}
  \caption{%
  The function \(G\) and its directional derivative \(G'( \cdot ; \delta)\) for
  an example with \(\beta = [-3, 1, 3, 2]^T\), \(k = 1\), and consequently
  \(c^{\setminus k} = \{1, 2\}\). The solution of \Cref{pb:cluster-problem} is the value of \(z\) for
  which \(G'(z; \delta) \geq 0 \) for \(\delta \in \{-1, 1\}\), which holds only
  at \(z = 2\), which must therefore be the solution.
  }
  \label{fig:directional-derivative}
\end{figure}

Using the directional derivative, we can now introduce the SLOPE thresholding operator.

\begin{theorem}[The SLOPE Thresholding Operator]
  \label{thm:thresholding-operator}
  Define \(S(x) = \sum_{j \in C(x)}\lambda_{(j)^-_{x}}\) and
  let
  \[
    \begin{multlined}
      T(\gamma; \omega, c, \lambda) = \\
      \begin{cases}
        0
         & \text{if } |\gamma| \leq S(\varepsilon_c),               \\
        \sign(\gamma)c_i
         & \text{if } \omega c_i + S(c_i - \varepsilon_c)           \\
         & \quad \leq |\gamma| \leq                                 \\
         & \quad \omega c_i + S(c_i + \varepsilon_c),               \\
        \frac{\sign(\gamma)}{\omega} \big( |\gamma| - S(c_i + \varepsilon_c) \big)
         & \text{if } \omega c_i + S(c_i + {\varepsilon_c})         \\
         & \quad < |\gamma| <                                       \\
         & \quad \omega c_{i - 1} + S(c_{i - 1} - {\varepsilon_c}), \\
        \frac{\sign(\gamma)}{\omega} \big( |\gamma| - S(c_1 + {\varepsilon_c}) \big)
         & \text{if } |\gamma| \geq                                 \\
         & \quad \omega c_1 + S(c_1 + {\varepsilon_c}).
      \end{cases}
    \end{multlined}
  \]
  with \({\varepsilon_c}\) defined as in \eqref{eq:epsilon-c}.
  Let $\tilde x = X_{\cC_k} \sign(\beta_{\cC_k})$
  and \(r = y - X\beta\).
  %and $\tilde r = y - X_{\bar \cC_k} \beta_{\bar \cC_k}$.
  % Let \(\gamma = \tilde{r}^Tx\), \(\omega = \tilde{x}^T\tilde{x}\). Then
  % \(T(\gamma; \omega, c^{(k)}, \lambda) \in \argmin_{z \in \mathbb{R}} G(z)\).
  Then
  \begin{equation}
    \begin{multlined}
      T \left(c_k\norm{\tilde x}^2 + \tilde x^Tr; \norm{x}^2, c^{\setminus k}, \lambda \right) = \argmin_{z \in \mathbb{R}} G(z) \,.
    \end{multlined}
  \end{equation}
\end{theorem}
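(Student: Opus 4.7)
The plan is to apply the first-order optimality condition for the convex problem~\eqref{pb:cluster-problem}. Since $G$ is convex, and strictly convex in $z$ when $\tilde x \neq 0$, the minimiser is unique and characterised by $G'(z;\delta) \geq 0$ for both $\delta \in \{-1,1\}$. I would first rewrite the smooth part by noting that $X\beta(z) = X\beta + (z - c_k)\tilde x$, so that, with $r = y - X\beta$, $\omega = \norm{\tilde x}^2$, and $\gamma = c_k \omega + \tilde x^\top r$, the derivative of $\tfrac{1}{2}\norm{y - X\beta(z)}^2$ with respect to $z$ equals $\omega z - \gamma$. The optimality condition therefore reads
\begin{equation*}
  \delta(\omega z - \gamma) + H'(z;\delta) \geq 0, \qquad \delta \in \{-1,1\},
\end{equation*}
and I would evaluate $H'(z;\delta)$ via \Cref{thm:sl1-directional-derivative} branch by branch, matching each to the corresponding case of $T$.

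For $z = 0$ the directional derivative equals $S(\varepsilon_c)$ irrespective of the sign of $\delta$, so the two inequalities collapse to $|\gamma| \leq S(\varepsilon_c)$, the first branch of $T$. For $z = \sign(\gamma) c_i$ with $c_i \in c^{\setminus k}\setminus\{0\}$, substituting the one-sided values from \Cref{thm:sl1-directional-derivative} yields $\omega c_i + S(c_i - \varepsilon_c) \leq |\gamma| \leq \omega c_i + S(c_i + \varepsilon_c)$, the second branch. For $|z|$ in an open interval between consecutive elements of $c^{\setminus k}\cup\{0\}$ (or larger than $c_1$), $H$ is differentiable at $z$ with $H'(z;\delta) = \sign(z)\,\delta\, S(z)$, and both inequalities reduce to the stationarity equation $\omega z - \gamma + \sign(z)S(z) = 0$. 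This forces $\sign(z) = \sign(\gamma)$ and gives $z = \sign(\gamma)\bigl(|\gamma| - S(z)\bigr)/\omega$. Because $S$ is constant on each such open interval—the ranks of the indices in $\mathcal{C}_k$ within the sorted order do not change—one may substitute $S(z) = S(c_i + \varepsilon_c) = S(c_{i-1} - \varepsilon_c)$ on $(c_i, c_{i-1})$, and $S(c_1 + \varepsilon_c)$ on $(c_1, \infty)$. Inserting these values into the stationarity equation and imposing that $|z|$ belongs to the corresponding interval produces exactly the $|\gamma|$-ranges appearing in the third and fourth branches of $T$.

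The main obstacle will be verifying that the four branches in the definition of $T$ partition the positive $|\gamma|$ axis without gaps or overlaps. This reduces to the identity $S(c_i + \varepsilon_c) = S(c_{i-1} - \varepsilon_c)$ on the interior of each slab between breakpoints, which makes consecutive branches abut exactly at the thresholds $\omega c_i + S(c_i \pm \varepsilon_c)$. Once this is established, the formula assigns a unique $z$ to each $\gamma$, and by strict convexity this $z$ equals $\argmin_z G(z)$, yielding the claim.
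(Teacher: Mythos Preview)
Your proposal is correct and follows essentially the same approach as the paper: both set $\gamma = c_k\omega + \tilde x^\top r$, reduce optimality to the two-sided inequality $\delta(\omega z - \gamma) + H'(z;\delta)\ge 0$, and then treat the cases $z=0$, $|z|=c_i$, and $|z|$ in an open slab separately using \Cref{thm:sl1-directional-derivative} together with the constancy $S(c_i+\varepsilon_c)=S(c_{i-1}-\varepsilon_c)$. Your write-up is slightly more explicit than the paper's in deriving $X\beta(z)=X\beta+(z-c_k)\tilde x$ and in checking that the branches partition the $|\gamma|$-axis, but there is no substantive difference in strategy.
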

An illustration of this operator is given in \Cref{fig:slope-thresholding}.
\begin{remark}
  The minimizer is unique because \(G\) is the sum of a quadratic function in one variable and a norm.
\end{remark}

\begin{remark}
  In the lasso case where the $\lambda_i$'s are all equal, the SLOPE thresholding operator reduces to the soft thresholding operator.
\end{remark}

In practice, it is rarely necessary to compute all sums in \Cref{thm:thresholding-operator}.
Instead, we first check in which direction we need to search relative to the current order for the cluster and then search in that direction until we find the solution.
The complexity of this operation depends on how far we need to search and the size of the current cluster and other clusters we need to consider.
In practice, the cost is typically larger at the start of optimization and becomes marginal as the algorithm approaches convergence and the cluster permutation stabilizes.

\begin{figure*}[htb]
  \centering
  \includegraphics[]{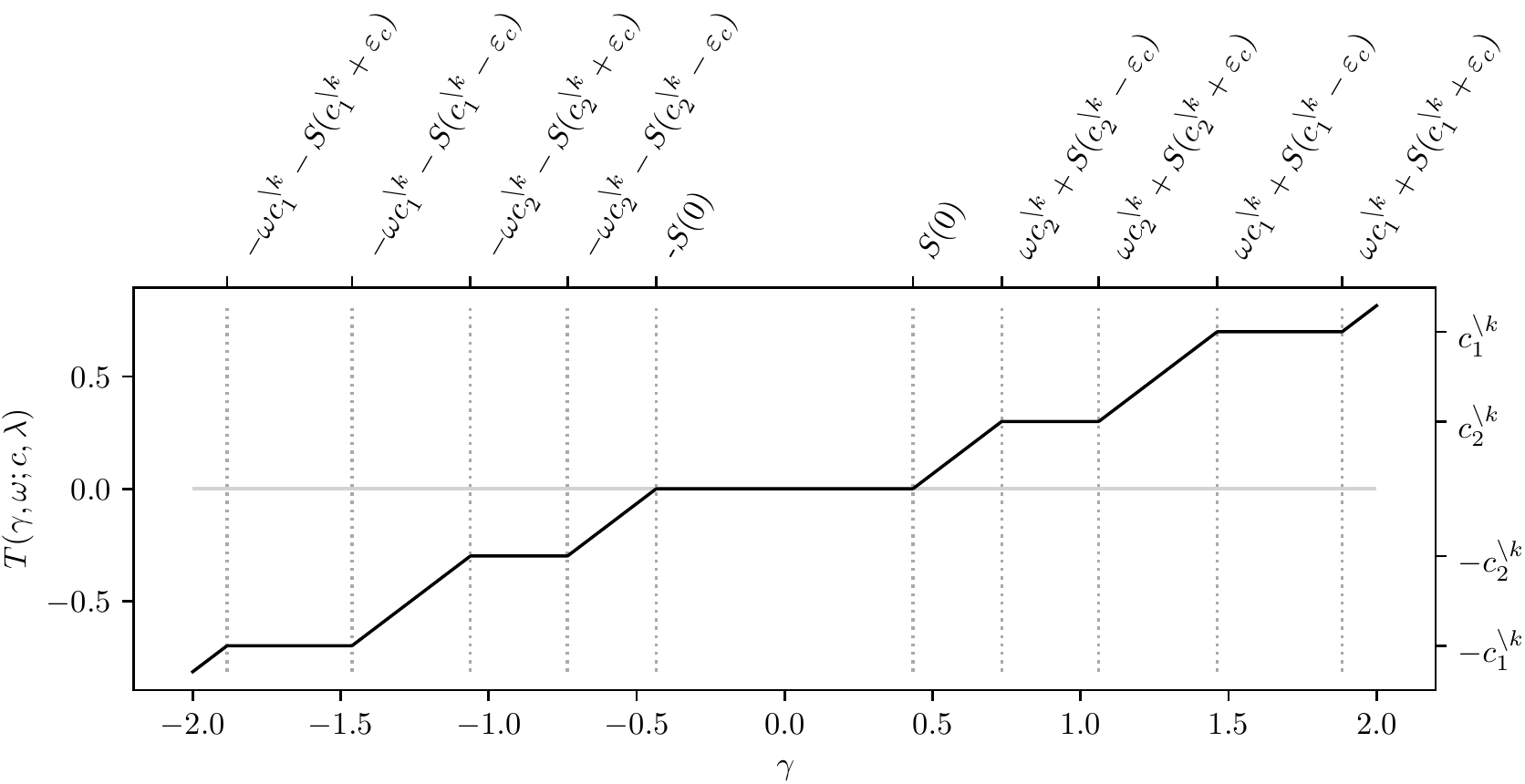}
  \caption{%
  An example of the SLOPE thresholding operator for \(\beta = [0.5, -0.5, 0.3, 0.7]^T\), \(c = (0.7, 0.5, 0.3)\)
  with an update for the second cluster (\(k = 2\)), such that
  \(c^{\setminus k} = (0.5, 0.3)\). Across regions where the function is constant,
  the operator sets the result to be either exactly 0 or to the value of one
  of the elements of \(\pm c^{\setminus k}\).
  }
  \label{fig:slope-thresholding}
\end{figure*}

\subsection{Proximal Gradient Descent Update}
\label{sec:pgd-update}

The coordinate descent update outlined in the previous section updates the coefficients of each cluster in unison, which allows clusters to merge---but not to split.
This means that the coordinate descent updates are not guaranteed to identify the clusters of the solution on their own.
To circumvent this issue, we combine these coordinate descent steps with full proximal gradient steps, which enable the algorithm to identify the cluster structure~\parencite{Liang2014} due to the partial smoothness property of the sorted \(\ell_1\) norm that we prove in \Cref{app:sec:partly_smooth}.
A similar idea has previously been used in \textcite{bareilles2022newton}, wherein Newton steps are taken on the problem structure identified after a proximal gradient descent step. 

\subsection{Hybrid Strategy}
\label{sec:hybrid-strategy}

We now present the proposed solver in \Cref{alg:hybrid}.
For the first and every $v$-th iteration\footnote{Our experiments suggest that \(v\) has little impact on performance as long as it is at least 3~(\Cref{sec:pgd-freq-study}). We have therefore set it to 5 in our experiments.}, we perform a proximal gradient descent update.
For the remaining iterations, we take coordinate descent steps.

\begin{algorithm}[hbt]
  \SetKwInOut{Input}{input}
  \caption{%
    Hybrid coordinate descent and proximal gradient descent algorithm
    for SLOPE\label{alg:hybrid}}
  \Input{%
    \(X \in \mathbb{R}^{n\times p}\),
    \(y\in \mathbb{R}^n\),
    \(\lambda \in \{\mathbb{R}^p : \lambda_1 \geq \lambda_2 \geq \cdots > 0\}\),
    \(v \in \mathbb{N}\),
    \(\beta \in \mathbb{R}^p\)
  }

  \For{\(t \gets 0,1,\dots\)}{

    \If{\(t \bmod v = 0\)}{
      \(\beta \leftarrow \prox_{J/{\norm{X}^2_2}}\Big(\beta - \frac{1}{\norm{X}_2^2}X^T(X \beta - y)\Big)\) \label{alg:hybrid-istastep}

      Update \(c\), \(\mathcal{C}\)
    }
    \Else{
      \(k \gets 1\)

      \While{\(k \leq \lvert \mathcal{C} \rvert\)}{
        \(\tilde x_k \gets X_{\mathcal{C}_k} \sign(\beta_{\cC_k}) \)

        \(z \gets T(c_k\norm{\tilde x}^2 - \tilde x^T(X\beta - y); \norm{x}^2, c^{\setminus k}, \lambda)\)

        $\beta_{\cC_k} \gets z \sign(\beta_{\cC_k})$

        Update \(c\), \(\mathcal{C}\)

        \(k \gets k + 1\)
      }
    }
  }
  \Return{\(\beta\)}
\end{algorithm}

The combination of the proximal gradient steps and proximal coordinate descent allows us to overcome the problem of vanilla proximal coordinate descent getting stuck because of non-separability and allows us to enjoy the speed-up provided by making local updates on each cluster, as we illustrate in \Cref{fig:illustration-solver}.

\begin{figure*}[htb]
  \centering
  \includegraphics{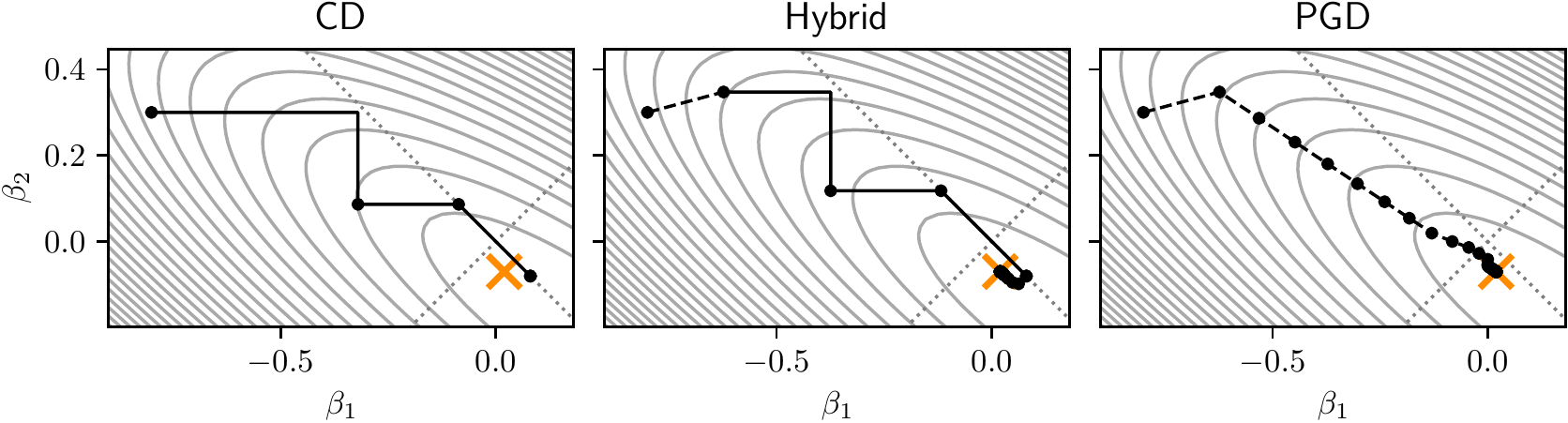}
  \caption{Illustration of the proposed solver. The figures show progress
    until convergence for the coordinate descent (CD) solver that we use as part
    of the hybrid method, our hybrid method, and  proximal gradient descent
    (PGD). The orange cross marks the optimum. Dotted lines indicate where the
    coefficients are equal in absolute value. The dashed lines indicate PGD
    steps and solid lines CD steps. Each dot marks a complete epoch, which may
    correspond to only a single coefficient update for the CD and hybrid
    solvers if the coefficients flip order. Each solver was run until the duality
    gap was smaller than \(10^{-10}\). Note that the CD algorithm cannot split clusters
    and is therefore stuck after the third epoch. The hybrid and PGD algorithms,
    meanwhile, reach convergence after 67 and 156 epochs respectively.}
  \label{fig:illustration-solver}
\end{figure*}

We now state that our proposed hybrid algorithm converges to a solution of \Cref{pb:slope}.

% \begin{lemma}
%   \label{lem:convergence}
%   Let \(\beta^{(1)}, \beta^{(2)}, \dots, \beta^{(k)}\) be a sequence of
%   iterates generated by \Cref{alg:hybrid}, \(1/v\) the frequency of proximal gradient
%   descent iterates in \Cref{alg:hybrid}. Then
%   \[
%     P(\beta^{(k)}) - P(\beta^*) \leq \frac{\Vert X \Vert_2^2 \lVert \beta^{(0)} - \beta^* \rVert^2}{2\lfloor k/v \rfloor }\, ,
%   \]
%   where \(\beta^* \in \argmin_{\beta \in \mathbb{R}^p} P.\)
% \end{lemma}

\begin{lemma}
  \label{lem:convergence}
  Let \(\beta^{(t)}\) be an iterate generated by \Cref{alg:hybrid}. Then
  \[
    \lim_{t \rightarrow \infty}\big(P(\beta^{(t)}) - P(\beta^*)\big) = 0.
  \]
\end{lemma}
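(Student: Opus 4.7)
The plan is to combine the monotone decrease of both the proximal gradient descent (PGD) and coordinate descent (CD) steps with standard fixed-point theory for PGD applied to the subsequence of PGD iterates. First, I would show that $P$ is non-increasing along $\{\beta^{(t)}\}$. On a PGD iteration with step size $1/L$, where $L = \norm{X}_2^2$ is the Lipschitz constant of $\nabla f(\beta) = X^\top(X\beta - y)$ for the smooth part $f(\beta) = \tfrac{1}{2}\norm{y - X\beta}^2$, the standard sufficient-decrease inequality
\begin{equation*}
    P(\beta) - P(\mathrm{PGD}(\beta)) \geq \tfrac{L}{2}\norm{\mathrm{PGD}(\beta) - \beta}^2
\end{equation*}
follows from $L$-strong convexity of the quadratic majorant defining the proximal step, combined with the descent lemma for $\nabla f$. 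On a CD iteration, the inner loop updates one cluster at a time via the SLOPE thresholding operator, which by \Cref{thm:thresholding-operator} returns the exact one-dimensional minimizer of $P$ along the cluster direction; hence each update can only decrease $P$. Consequently $\{P(\beta^{(t)})\}$ is non-increasing, bounded below by $P(\beta^*)$, and converges to some $P^\infty \geq P(\beta^*)$.

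Next, I would note that $J$ is coercive (since $J(\beta) \geq \lambda_1 \norm{\beta}_\infty$ with $\lambda_1 > 0$), so the iterates remain in the compact sublevel set $\{\beta : P(\beta) \leq P(\beta^{(0)})\}$. Letting $u_k = \beta^{(vk)}$ denote the iterate just before the $k$-th PGD step and $u_k^+ = \mathrm{PGD}(u_k)$, the intervening CD sweeps cannot increase $P$, so
\begin{equation*}
    P(u_k) - P(u_{k+1}) \geq P(u_k) - P(u_k^+) \geq \tfrac{L}{2}\norm{u_k^+ - u_k}^2.
\end{equation*}
Telescoping yields $\sum_k \norm{u_k^+ - u_k}^2 \leq \tfrac{2}{L}\bigl(P(u_0) - P^\infty\bigr) < \infty$, and hence $\norm{u_k^+ - u_k} \to 0$.

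To conclude, I would extract a convergent subsequence $u_{k_j} \to u^\star$ (possible by boundedness). By continuity of $\prox_{J/L}$ and of $\nabla f$, passing to the limit in $u_k^+ = \prox_{J/L}\bigl(u_k - \tfrac{1}{L}\nabla f(u_k)\bigr)$ together with $\norm{u_k^+ - u_k} \to 0$ gives $u^\star = \prox_{J/L}\bigl(u^\star - \tfrac{1}{L}\nabla f(u^\star)\bigr)$, equivalently $0 \in \nabla f(u^\star) + \partial J(u^\star)$. Since \eqref{pb:slope} is convex, $u^\star$ is a minimizer, so $P(u^\star) = P(\beta^*)$; continuity of $P$ combined with the monotonicity from the first step then upgrades $P(u_{k_j}) \to P(\beta^*)$ to $P(\beta^{(t)}) \to P(\beta^*)$. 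The main obstacle is conceptual rather than technical: CD alone is not guaranteed to converge (as \Cref{fig:naive-cd-stuck} shows, it can stall because it cannot split clusters), so the argument crucially relies on the interleaved PGD steps to drive the iterates to a fixed point, while the monotone descent of the CD sweeps is precisely what licenses telescoping the PGD sufficient-decrease inequality across the full sequence.
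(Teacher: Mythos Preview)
Your argument is correct and relies on the same three ingredients as the paper's proof: monotone decrease of the CD sweeps, the sufficient-decrease inequality for the PGD step (Lemma~2.3 in \textcite{beck2009}), and compactness of the sublevel sets of $P$. The difference is purely in the final packaging. The paper groups $v$ consecutive iterations into a point-to-set map $A$ and invokes Zangwill's Convergence Theorem~A \parencite[p.~91]{zangwill1969}, checking the three hypotheses (compact iterates, strict descent outside the solution set via $\lVert T_{\mathrm{PGD}}(\beta)-\beta\rVert>0$, and non-increase at a solution). You instead telescope the PGD sufficient-decrease bound across the full monotone sequence to force $\lVert u_k^+-u_k\rVert\to 0$, extract a convergent subsequence, and pass to the limit using the $1$-Lipschitz continuity of the prox. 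Your route is more self-contained and avoids the black-box theorem; the paper's route is shorter once Zangwill is taken for granted and makes the role of the interleaved PGD step (strict descent off the solution set) slightly more explicit. Either way, the essential mechanism---CD never increases $P$, and the periodic PGD step supplies the strict descent that rules out stalling---is identical.
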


\paragraph{Alternative Datafits}

So far we have only considered sorted \(\ell_1\)-penalized least squares regression.
In \Cref{sec:other-datafits}, we consider possible extensions to alternative datafits.

\section{EXPERIMENTS}\label{sec:experiments}
%%%%%%%%%%%%%%%%%%%%%%%%%%%%%%%%%%%%%%%%%%%

To investigate the performance of our algorithm, we performed an extensive benchmark against the following competitors:
\begin{itemize}[noitemsep]
  \item Alternating direction method of multipliers (\texttt{ADMM}, \cite{boyd2010}).
        We considered several alternative for the choice of the augmented Lagragian parameter: an adaptive method to update the parameter throughout the algorithm~\parencite[Sec. 3.4.1]{boyd2010} and fixed values.
        In the following sections, we only kept the \texttt{ADMM} solver with a fixed value of $100$ for the augmented Lagrangian parameter.
        We present in \Cref{sec:admm-benchmarks} a more detailed benchmarks for \texttt{ADMM} solvers with different values of this parameter and the adaptive setting.
        Choosing this parameter is not straightforward and the best value changes across datasets and regularization strengths.
  \item Anderson acceleration for proximal gradient descent (\texttt{Anderson PGD}, \cite{zhang2020})
  \item Proximal gradient descent (\texttt{PGD}, \cite{combettes2005})
  \item Fast Iterative Shrinkage-Thresholding Algorithm (\texttt{FISTA}, \cite{beck2009})
  \item Semismooth Newton-Based Augmented Lagrangian (\texttt{Newt-ALM}, \cite{Ziyan2019})

  \item The hybrid (our) solver (see \Cref{alg:hybrid}) combines proximal gradient descent
        and coordinate descent to overcome the non-separability of the SLOPE problem.
  \item The oracle solver (\texttt{oracle CD}) solves \Cref{pb:separable_slope} with coordinate descent, using the clusters obtained via another solver.
        Note that it cannot be used in practice as it requires knowledge of the solution's clusters.
\end{itemize}

We used \pkg{Benchopt}~\parencite{moreau2022benchopt} to obtain the convergence curves for the different solvers.
\pkg{Benchopt} is a collaborative framework that allows reproducible and automatic benchmarks.
% This would void anonymity. We should uncomment for the camera-ready version.
% The repository to reproduce the benchmark is available at \url{https://github.com/Klopfe/benchmark_slope}.
The repository to reproduce the benchmark is available at \href{https://github.com/klopfe/benchmark\_slope}{\url{github.com/klopfe/benchmark\_slope}}.

Unless we note otherwise, we used the Benjamini--Hochberg method to compute the \(\lambda\) sequence~\parencite{bogdan2015},
which sets $\lambda_j = \eta^{-1}(1 - q\times j / (2p))$ for $j=1, 2, \hdots, p$ where $\eta^{-1}$ is the probit function.
For the rest of the experiments section, the parameter $q$ of this sequence has been set to $0.1$ if not stated otherwise.\footnote{We initially experimented with various settings for \(q\) but found that they made little difference to the relative performance of the algorithms.}
We let \(\lambda_\text{max}\) be the \(\lambda\) sequence such that \(\beta^* = 0\), but for which any scaling with a strictly positive scalar smaller than one produces a solution with at least one non-zero coefficient.
We then parameterize the experiments by scaling \(\lambda_\text{max}\), using the fixed factors \(1/2\), \(1/10\), and \(1/50\), which together cover the range of very sparse solutions to the almost-saturated case.

We pre-process datasets by first removing features with less than three non-zero values. Then, for dense data we center and scale each feature by its mean and standard deviation respectively.
For sparse data, we scale each feature by its maximum absolute value.

Each solver was coded in \pkg{python}, using \pkg{numpy}~\parencite{harris2020} and \pkg{numba}~\parencite{lam2015} for performance-critical code.
The code is available at \href{https://github.com/jolars/slopecd}{\url{github.com/jolars/slopecd}}.
In \Cref{sec:solver-details}, we provide additional details on the implementations of some of the solvers used in our benchmarks.

The computations were carried out on a computing cluster with dual Intel Xeon CPUs (28 cores) and 128 GB of RAM.

\subsection{Simulated Data}
\label{sec:experiments-simulated-data}

\begin{figure*}[!t]
  \centering
  \includegraphics{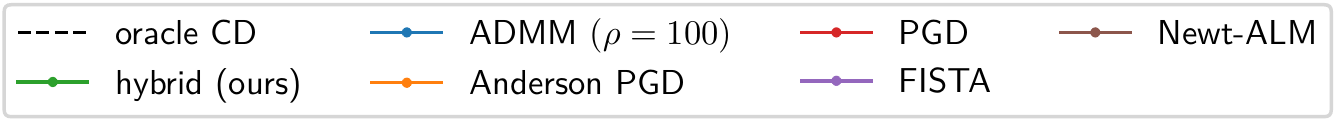}
  \includegraphics{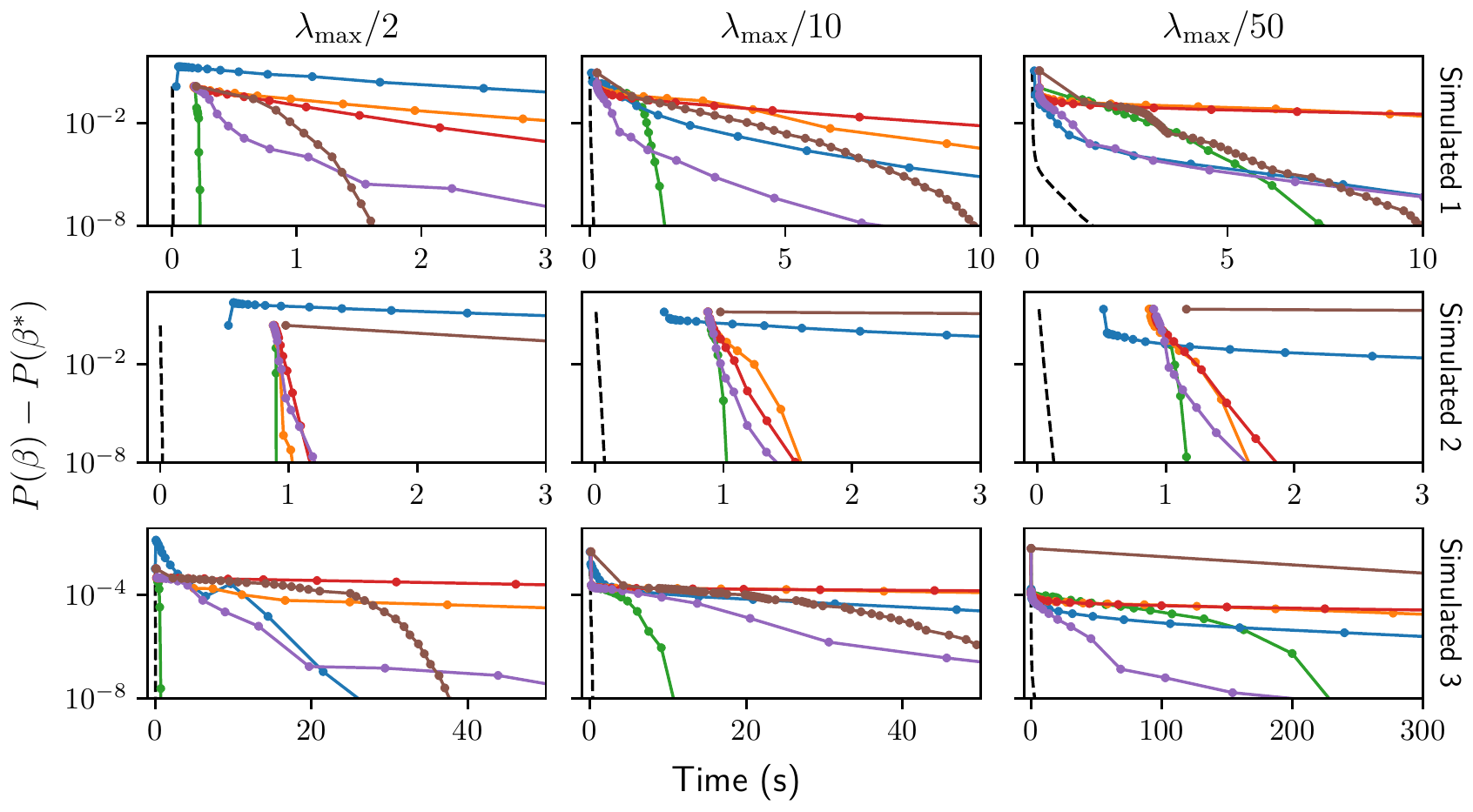} \caption{Benchmark on simulated datasets. The plots show suboptimality as a function of time for SLOPE on multiple simulated datasets and $\lambda$ sequences of varying strength.}
  \label{fig:simulated}
\end{figure*}

The design matrix $X$ was generated such that features had mean one and unit variance, with correlation between features $j$ and $j'$ equal to $0.6^{|j-j'|}$.
%where $\rho$ is a parameter that can be chosen in $[0, 1[$.
%We fixed this value at $0.6$.
We generated \(\beta \in \mathbb{R}^p\) such that \(k\) entries, chosen uniformly at random throughout the vector, were sampled from a standard Gaussian distribution.
The response vector, meanwhile, was set to $y=X\beta + \varepsilon$, where
$\varepsilon$ was sampled from a multivariate Gaussian distribution with variance such that $\lVert X\beta\rVert / \lVert \varepsilon \rVert = 3$.
The different scenarios for the simulated data are described in \Cref{tab:simulated-data}.

\begin{table}[hbt]
  \centering
  \caption{Scenarios for the simulated data in our benchmarks}
  \label{tab:simulated-data}
  \begin{tabular}{
      l
      S[table-format=5.0,round-mode=off]
      S[table-format=7.0,round-mode=off]
      S[table-format=2.0,round-mode=off]
      S[table-format=1.3,round-mode=off]
    }
    \toprule
    {Scenario} & {\(n\)} & {\(p\)} & {\(k\)} & {Density} \\ \midrule
    1          & 200     & 20000   & 20      & 1         \\
    2          & 20000   & 200     & 40      & 1         \\
    3          & 200     & 200000  & 20      & 0.001     \\ \bottomrule
  \end{tabular}
\end{table}

In \Cref{fig:simulated}, we present the results of the benchmarks on simulated data.
We see that for smaller fractions of $\lambda_{\text{max}}$ our hybrid algorithm allows significant speedup in comparison to its competitors mainly when the number of features is larger than the number of samples.
On very large scale data such as in simulated data setting $3$, we see that the hybrid solver is faster than its competitors by one or two orders of magnitude.

For the second scenario, notice that all solvers take considerably longer than the \texttt{oracle CD} method to reach convergence.
This gap is a consequence of Cholesky factorization in the case of \texttt{ADMM} and computation of \(\norm{X}_2\) in the remaining cases.
For the hybrid method, we can avoid this cost, with little impact on performance, since \(\norm{X}_2\) is used only in the PGD step.

\subsection{Real data}
\label{sec:experiments-real-data}

The datasets used for the experiments have been described in \Cref{tab:real-data} and were obtained from \textcite{chang2011,chang2016,breheny2022}.

\begin{table}[hbt]
  \centering
  \caption{%
    List of real datasets used in our experiments.
    See \Cref{tab:dataset-sources} in \Cref{sec:dataset-sources} for references on these datasets.
  }
  \label{tab:real-data}
  \begin{tabular}{
      l
      S[table-format=5.0,round-mode=off]
      S[table-format=7.0,round-mode=off]
      S[table-format=1.5,round-mode=figures,round-precision=2]
    }
    \toprule
    Dataset            & {\(n\)} & {\(p\)} & {Density} \\ \midrule
    \dataset{bcTCGA}   & 536     & 17322   & 1         \\
    \dataset{news20}   & 19996   & 1355191 & 0.0003357 \\
    \dataset{rcv1}     & 20242   & 44504   & 0.00166   \\
    \dataset{Rhee2006} & 842     & 360     & 0.02469   \\ \bottomrule
  \end{tabular}
\end{table}

\Cref{fig:real-data} shows the suboptimality for the objective function $P$ as a function of the time for the four different datasets.
We see that when the regularization parameter is set at $\lambda_{\text{max}}/2$ and $\lambda_{\text{max}}/10$, our proposed solver is faster than all its competitors---especially when the datasets become larger.
This is even more visible for the \dataset{news20} dataset where we see that our proposed method is faster by at least one order of magnitude.

\begin{figure*}[!t]
  \centering
  \includegraphics{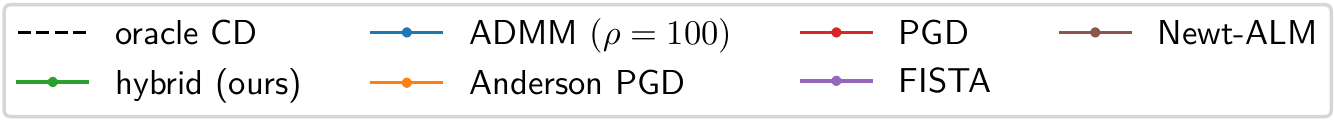}
  \includegraphics{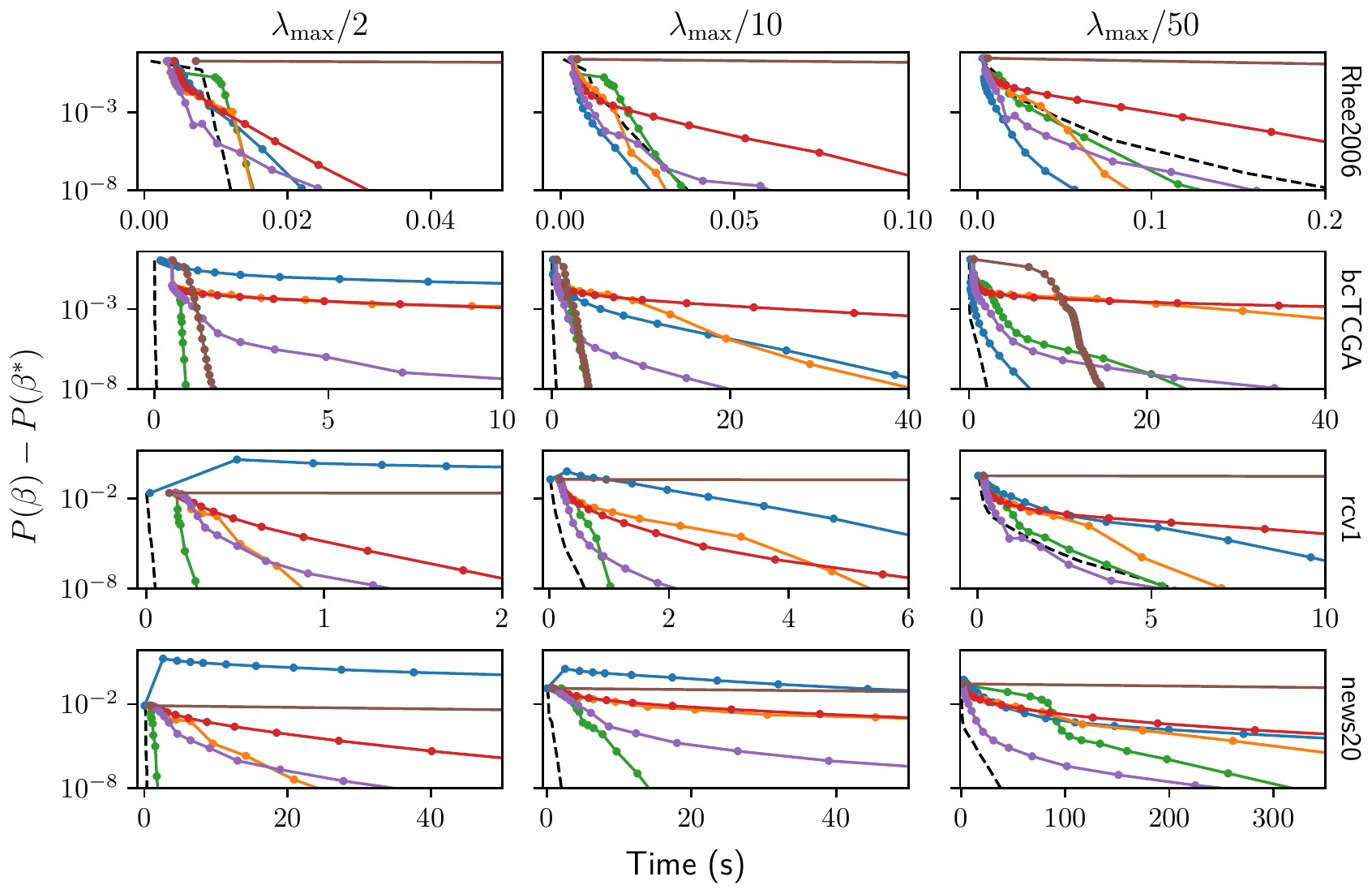}
  \caption{Benchmark on real datasets. The plots show suboptimality as a function of time for SLOPE on multiple simulated datasets and $\lambda$ sequences of varying strength.}
  \label{fig:real-data}
\end{figure*}

When the parametrization value is set to $\lambda_{\text{max}}/50$, our algorithm remains competitive on the different datasets.
It can be seen that the different competitors do not behave consistently across the datasets.
For example, the \texttt{Newt-ALM} method is very fast on the \dataset{bcTCGA} dataset but is very slow on the \dataset{news20} dataset whereas the \texttt{hybrid} method remains very efficient in both settings.

\section{DISCUSSION}\label{sec:discussion}
%%%%%%%%%%%%%%%%%%%%%%%%%%%%%%%%%%%%%%%%%

In this paper we have presented a new, fast algorithm for solving Sorted L-One Penalized Estimation (SLOPE).
Our method relies on a combination of proximal gradient descent to identify the cluster structure of the solution and coordinate descent to allow the algorithm to take large steps.
In our results, we have shown that our method often outperforms all competitors by orders of magnitude for high-to-medium levels of regularization and typically performs among the best algorithms for low levels of regularization.

We have not, in this paper, considered using screening rules for SLOPE~\parencite{larsson2020c,elvira2022}.
Although screening rules work for any algorithm considered in this article, they are particularly effective when used in tandem with coordinate descent~\parencite{fercoq2015} and, in addition, easy to implement due to the nature of coordinate descent steps.
Coordinate descent is moreover especially well-adapted to fitting a path of \(\lambda\) sequences~\parencite{friedman2007,friedman2010}, which is standard practice during cross-validating to obtain an optimal \(\lambda\) sequence.

Future research directions may include investigating alternative strategies to split clusters, for instance by considering the directional derivatives with respect to the coefficients of an entire cluster at once.
Another potential approach could be to see if the full proximal gradient steps might be replaced with batch stochastic gradient descent in order to reduce the costs of these steps.
It would also be interesting to consider whether gap safe screening rules might be used not only to screen predictors, but also to deduce whether clusters are able to change further during optimization.
Finally, combining cluster identification of proximal gradient descent with solvers such as second order ones as in~\textcite{bareilles2022newton} is a direction of interest.

\subsubsection*{Acknowledgements}

The experiments presented in this paper were carried out using the HPC facilities of the University of Luxembourg~\parencite{Varette2022} (see \texttt{\href{http://hpc.uni.lu}{hpc.uni.lu}}).

The results shown here are in whole or part based upon data generated by the TCGA Research Network: \url{https://www.cancer.gov/tcga}.

% \clearpage % do we need this?
\printbibliography[heading=subbibliography]

\onecolumn

\appendix

\aistatstitle{Supplement to \emph{Coordinate Descent for SLOPE}}

\section{PROOFS}
\label{sec:proofs}

\subsection{Proof of \Cref{thm:sl1-directional-derivative}}
\label{app:proof_directional_derivative}

Let \(c^{\setminus k}\) be the set containing all elements of $c$ except the $k$-th one: $c^{\setminus k} =  \{c_1, \ldots c_{k-1}, c_{k+1}, \ldots, c_m \}$.

From the observations in \Cref{rem:permutation_C_z},
we have the following cases to consider: \(|z| \in c^{\setminus k}\),
\(|z| = 0\), and \(|z| \notin \{0\} \cup c^{\setminus k}\).

Since \(C(z + \delta h) = C(z) = \cC_k\) and $\sign(z + \delta h) = \sign(z)$ for $h$ small enough,
\begin{align}
	H(z + \delta h) - H(z)
	 & = \sum_{j =1}^p |\beta(z + \delta h)_j| \lambda_{(j)^-_{z + \delta h}}
	- \sum_{j=1}^p |\beta(z)_j| \lambda_{(j)^-_z} \nonumber                                       \\
	 & = \sum_{j =1}^p (|\beta(z + \delta h)_j| - |\beta(z)_j|) \lambda_{(j)^-_z} \nonumber       \\
	 & = \sum_{j =1}^p (|\beta(z + \delta h)_j| - |\beta(z)_j|) \lambda_{(j)^-_z} \nonumber       \\
	 & = \sum_{j \in C(z)}^p (|\beta(z + \delta h)_j| - |\beta(z)_j|) \lambda_{(j)^-_z} \nonumber \\
	 & = \sum_{j \in C(z)} \sign(\beta(z)_j) (z + \delta h - z) \lambda_{(j)^-_z} \nonumber       \\
	 & = \sum_{j \in C(z)} \sign(z) \delta h  \lambda_{(j)^-_z} \nonumber                         \\
	 & = \sum_{j \in \cC_k} \sign(z) \delta h  \lambda_{(j)^-_z} \, .
\end{align}

\paragraph{Case 2}
Then if  $z \neq 0$ and $|z|$ is equal to one of the $c_i$'s, $i \neq k$,  one has $C(z) = \cC_k \cup \cC_i$, $C(z + \delta h) = \cC_k$, and $\sign(z + \delta h) = \sign(z)$ for $h$ small enough.
Thus
\begin{align}
	H(z + \delta h) - H(z)
	 & = \sum_{j =1}^p |\beta(z + \delta h)_j| \lambda_{(j)^-_{z + \delta h}}
	- \sum_{i=1}^p |\beta(z)_j| \lambda_{(j)^-_z}  \nonumber                                         \\
	 & = \sum_{j \in \cC_k \cup \cC_i} \left( |\beta(z + \delta h)_j| \lambda_{(j)^-_{z + \delta h}}
	- |\beta(z)_j| \lambda_{(j)^-_z} \right)  \nonumber                                              \\
	 & = \sum_{j \in \cC_k} \left( c_i + \delta h \right) \lambda_{(i)^-_{z + \delta h}}
	- c_i \lambda_{(i)^-_z}
	+ \sum_{j \in \cC_i} \left( c_i \lambda_{(j)^-_{z + \delta h}}
	- c_i \lambda_{(i)^-_z} \right) \, .
\end{align}
Note that there is an ambiguity in terms of permutation, since, due to the clustering, there can be more than one permutation reordering $\beta(z)$.
However, choosing any such permutation result in the same values for the computed sums.

\paragraph{Case 3} Finally let us treat the case $z = 0$.
If $c_m = 0$ then the proof proceeds as in case 2, with the exception that $|\beta(z + \delta h)| = h$ and so the result is just:
\begin{align}
	H(z + \delta h) - H(z)
	 & = h \sum_{j \in \cC_k} \lambda_{(i)^-_{z + \delta h}} \, .
\end{align}
If $c_m \neq 0$, then the computation proceeds exactly as in case 1.

\subsection{Proof of \Cref{thm:thresholding-operator}}

Recall that \(G(z) : \mathbb{R} \to \mathbb{R}\) is a convex,
continuous piecewise-differentiable function with breakpoints whenever \(|z| =
c_i^{\setminus k}\) or \(z = 0\). Let \(\gamma = c_k \norm{\tilde{x}}^2+ \tilde x^T r\)
and \(\omega = \norm{\tilde{x}}^2\) and note that the optimality criterion
for~\eqref{pb:cluster-problem} is
\[
	\delta(\omega z - \gamma) + H'(z; \delta) \geq 0, \quad
	\forall \delta \in \{-1, 1\},
\]
which is equivalent to
\begin{equation}
	\label{eq:optimality-inequality}
	\omega z - H'(z; -1) \leq \gamma \leq \omega z + H'(z; 1).
\end{equation}
We now proceed to show that there is a solution \(z^* \in \argmin_{z \in
	\mathbb{R}} H(z)\) for every interval over \(\gamma \in \mathbb{R}\).

First, assume that the first case in the definition of \(T\) holds
and note that this is equivalent to~\eqref{eq:optimality-inequality} with \(z
= 0\) since \(C({\varepsilon_c}) = C(-{\varepsilon_c})\) and
\(\lambda_{(j)^-_{-{\varepsilon_c}}} = \lambda_{(j)^-_{{\varepsilon_c}}}\).
This is sufficient for \(z^* = 0\).

Next, assume that the second case holds and observe that this is equivalent
to~\eqref{eq:optimality-inequality} with
\(z = c_i^{\setminus k}\), since
\(C(c_i + {\varepsilon_c}) = C(-c_i - {\varepsilon_c})\) and
\(C(-c_i + {\varepsilon_c}) = C(c_i - {\varepsilon_c})\). Thus \(z^* =
\sign(\gamma)c_i^{\setminus k}\).

For the third case, we have
\[
	\smashoperator{\sum_{j \in C(c_i + {\varepsilon_c})}} \lambda_{(j)^-_{c_i + {\varepsilon_c}}}
	=
	\smashoperator[r]{\sum_{j \in C(c_{i-1} - {\varepsilon_c})}} \lambda_{(j)^-_{c_{i-1} - {\varepsilon_c}}}
\]
and therefore~\eqref{eq:optimality-inequality} is equivalent to
\[
	c_i < \frac{1}{\omega} \bigg( |\gamma| - \smashoperator{\sum_{j \in C(c_i + {\varepsilon_c})}} \lambda_{(j)^-_{c_i + {\varepsilon_c}}} \bigg) < c_{i -1}.
\]
Now let
\begin{equation}
	\label{eq:differentiable-solution}
	z^* = \frac{\sign(\gamma)}{\omega} \bigg( |\gamma| - \smashoperator{\sum_{j \in C(c_i + {\varepsilon_c})}} \lambda_{(j)^-_{c_i + {\varepsilon_c}}} \bigg)
\end{equation}
and note that \(|z^*| \in \big(c_i^{\setminus k}, c_{i-1}^{\setminus k}\big)\) and hence
\[
	\frac{1}{\omega} \bigg( |\gamma| - \smashoperator{\sum_{j \in C(c_i + {\varepsilon_c})}} \lambda_{(j)^-_{c_i + {\varepsilon_c}}} \bigg)
	=
	\frac{1}{\omega} \bigg( |\gamma| - \smashoperator{\sum_{j \in C(z^*)}} \lambda_{(j)^-_{z^*}} \bigg).
\]
Furthermore, since \(G\) is differentiable in \(\big(c_i^{\setminus k}, c_{i-1}^{\setminus k}\big)\), we have
\[
	\frac{\partial}{\partial z} G(z) \Big|_{z = z^*}
	= \omega z^* - \gamma + \sign(z^*) \smashoperator{\sum_{j \in C(z^*)}} \lambda_{(j)^-_{z^*}} = 0,
\]
and therefore~\eqref{eq:differentiable-solution} must be the solution.

The solution for the last case follows using reasoning analogous to that of the
third case.

\subsection{Proof of \Cref{lem:convergence}}

% \subsubsection{First Proof}

% Let $\beta^*$ denote the stationary point of $F$.
% If we denote the $t$th iteration of \Cref{alg:hybrid} as $T_t$, that is, $\beta^{(t+1)} = T_t(\beta^{t})$. Then by definition
% $$
% T_t(\beta)=\begin{cases}
% T_\text{PGD}(\beta) & \mbox{if } t\bmod v=0, \\
% T_\text{CD}(\beta) & \mbox{otherwise,}  \\
% \end{cases}
% $$
% where $T_\text{PGD}$ is an iteration of proximal gradient descent and $T_\text{CD}$ is an iteration of coordinate descent. Clearly $P(T_\text{CD}(\beta)) \leq P(\beta)$. Now note that
% $$
% T_\text{PGD}(\beta)= \argmin_{\hat{\beta}} \left( H(\beta,\hat{\beta}) = f(\beta) + \nabla f(\beta)^T (\hat{\beta} - \beta) + \frac{L(f)}{2} ||\beta-\hat{\beta}  ||^2+ J(\beta) \right)
% $$
% where $f(\beta) = \frac{1}{2} \norm{y - X \beta}^2$ and $L(f)$ is the Lipschitz constant.
% Note that by strong convexity we have $H\big(\beta,T_\text{PGD}(\beta)\big) \leq F\big(T_\text{PGD}(\beta)\big)$.
% Then we can apply Lemma 3 in \textcite{richtarik2014} to get
% $$
%   P(T_\text{PGD}(\beta)) - P(\beta^*) \leq
%   \begin{cases}
%     \left(1 - \frac{P(\beta) - P(\beta^*)}{R^2} \right) \left( P(\beta) - P(\beta^*)\right) & \mbox{if } \left( P(\beta) - P(\beta^*) \right)\leq R^2, \\
%     \frac{1}{2}\left( P(\beta) - P(\beta^*)\right) & \mbox{otherwise,}
%   \end{cases}
% $$
% where $R^2=||\beta - \beta^*||_L$.

% To get the convergence rate apply proof of Theorem~1 in \textcite{richtarik2014}.

% \subsubsection{Alternative Proof}

To prove the lemma, we will show that $\lim_{t \rightarrow \infty} \beta^{(t)} \in \Omega = \{\beta:0 \in \partial P(\beta)\}$ using Convergence Theorem A in \textcite[p.~91]{zangwill1969}.
For simplicity, we assume that the point to set map $A$ is generated by $v$ iterations of \Cref{alg:hybrid}, that is $A(\beta^{(0)})= \{\beta^{(vi)}\}_{i=0}^\infty$.
To be able to use the theorem, we need the following assumptions to hold.
\begin{enumerate}
	\item The set of iterates, $A(\beta^{(0)})$ is in a compact set.
	\item $P$ is continuous and if $\beta \notin \Omega = \{\beta:0 \in \partial P(\beta)\}$, then for any $\hat{\beta} \in A(\beta)$  it holds that $P(\hat{\beta}) < P(\beta)$.
	\item If $\beta  \in \Omega =\{\beta:0 \in \partial P(\beta)\}$, then for any $\hat{\beta}\in A(\beta)$ it holds that $P(\hat{\beta}) \leq P(\beta)$.
\end{enumerate}

Before tackling these three assumptions, we decompose the map into two parts: $v-1$ coordinate descent steps, $T_\text{CD}$, and one proximal gradient decent step, $T_{\text{PGD}}$.
This clearly means that
$$
	P(T_\text{CD}(\beta)) \leq P(\beta)
$$
for all $\beta \in \mathbb{R}^p$.
For $T_\text{PGD}$, we have two useful properties: first, if $||T_\text{PGD}(\beta) - \beta||=0$, then by Lemma~2.2 in \textcite{beck2009} it follows that $\beta \in \Omega$.
Second, by Lemma~2.3 in \textcite{beck2009}, using $x=y$, it follows that
$$
	P(T_\text{PGD}(\beta)) - P(\beta) \leq  - \frac{L(f)}{2}||T_\text{PGD}(\beta) - \beta||^2,
$$
where $L(f)$ is the Lipschitz constant of the gradient of $f(\beta)= \frac{1}{2}||y-X\beta||^2$.

We are now ready to prove that the three assumptions hold.
\begin{itemize}
	\item Assumption 1 follows from the fact that the level sets of $P$ are compact and from $P(T_{PGD}(\beta)) \leq P(\beta)$ and $P(T_{CD}(\beta)) \leq P(\beta)$.
	\item Assumption 2 holds since if $\beta \notin \Omega$, it follows that $||T_\text{PGD}(\beta) - \beta|| > 0$ and thus $P(T_\text{PGD}(\beta)) < P(\beta)$.
	\item Assumption 3 follows  from $P(T_{PGD}(\beta)) \leq P(\beta)$ and $P(T_{CD}(\beta)) \leq P(\beta)$.
\end{itemize}
Using Theorem~1 from \textcite{zangwill1969}, this means that \Cref{alg:hybrid} converges as stated in the lemma.

\subsection{Partial Smoothness of the Sorted $\ell_1$ Norm}
\label{app:sec:partly_smooth}

In this section, we prove that the sorted $\ell_1$ norm $J$ is partly smooth~\parencite{lewis2002a}.
This allows us to apply results about the structure identification of the proximal gradient algorithm.

\begin{definition}
	Let $J$ be a proper closed convex function and $x$ a point of its domain such that $\partial J(x) \neq \emptyset$.
	$J$ is said to be partly smooth at $x$ relative to a set $\cM$ containing $x$ if:
	\begin{enumerate}
		\item $\cM$ is a $C^2$-manifold around $x$ and $J$ restricted to $\cM$ is $C^2$ around $x$.
		\item The tangent space of $\cM$ at $x$ is the orthogonal of the parallel space of $\partial J(x)$.
		\item $\partial J$ is continuous at $x$ relative to $\cM$.
	\end{enumerate}
\end{definition}

Because the sorted \(\ell_1\) norm is a polyhedral, it follows immediately that it
is partly smooth~\parencite[Example 18]{vaiter2017}. But since we believe a direct
proof is interesting in and of itself, we provide and prove the following proposition here.

\begin{proposition}
	Suppose that the regularization parameter $\lambda$ is a strictly decreasing sequence. Then the sorted $\ell_1$ norm is partly smooth at any point of $\bbR^p$.
\end{proposition}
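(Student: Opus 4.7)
The approach is to exhibit the candidate partly smooth manifold $\cM$ explicitly from the cluster/sign structure of $x$ and then verify the three defining conditions in turn, leaning on the polyhedrality of $J$.

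\textbf{Setup and conditions 1 and 3.} Let $x \in \bbR^p$ have clusters $\cC_1, \ldots, \cC_m$ with values $c_1 > \cdots > c_m \geq 0$, signs $s_i = \sign(x_i)$ on non-zero clusters, and let $\tilde m$ denote the number of non-zero clusters. Take
\[
\cM = \Big\{ y \in \bbR^p : \exists\, d_1 > \cdots > d_{\tilde m} > 0,\; y_i = s_i d_k \;\forall i \in \cC_k\text{ with }c_k>0,\; y_{\cC_m} = 0 \text{ if } c_m = 0 \Big\},
\]
which is a relatively open subset of an $\tilde m$-dimensional linear subspace, hence a $C^\infty$ manifold through $x$. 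Along $\cM$ the cluster partition, signs, and relative ordering of cluster values are all constant, so $J|_\cM(y) = \sum_{k : c_k > 0} \Lambda_k\, d_k$ with $\Lambda_k = \sum_{j \in \cC_k} \lambda_{(j)^-_x}$; this sum is well-defined despite within-cluster permutation ambiguity (only $\Lambda_k$ enters, and the strict decrease of $\lambda$ ensures no collision between clusters). The expression is linear in the cluster coordinates, yielding condition~1. Since $\partial J$ depends only on this constant cluster/sign/ordering data, it is constant on $\cM$ and thus trivially continuous, giving condition~3.

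\textbf{Condition 2.} The tangent space is $T_x \cM = \Span\{u_k\}_{k=1}^{\tilde m}$ with $u_k = \sum_{i \in \cC_k} s_i e_i$, so
\[
T_x \cM^{\perp} = \Big\{ w \in \bbR^p : \sum_{i \in \cC_k} s_i w_i = 0 \text{ for each } k \text{ with } c_k>0 \Big\},
\]
with no constraint on $w_{\cC_m}$ when $c_m = 0$. The task reduces to showing $\parset \partial J(x) = T_x \cM^{\perp}$. Using the Fenchel extremal characterization $g \in \partial J(x) \Leftrightarrow \langle g,x\rangle = J(x)$ and $J^*(g) \leq 1$, combined with the Hardy--Littlewood--Pólya rearrangement inequality, one shows that (i) every $g \in \partial J(x)$ saturates the cluster-wise identity $\sum_{i \in \cC_k} s_i g_i = \Lambda_k$ for each non-zero cluster, contributing codimension $1$ per cluster to $\parset \partial J(x)$; and (ii) if $c_m = 0$, the restriction of $\partial J(x)$ to $\cC_m$ is a polytope with non-empty relative interior in $\bbR^{|\cC_m|}$, so $\cC_m$ contributes its full coordinate subspace to $\parset \partial J(x)$. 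Adding these codimensions gives $\tilde m$ exactly, and the resulting subspace is $T_x \cM^{\perp}$.

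\textbf{Main obstacle.} The crux is condition~2, specifically showing that the extremal equalities saturate \emph{cluster by cluster} rather than merely in aggregate, and that the zero cluster enjoys full-dimensional freedom in the subdifferential. The strict monotonicity of $\lambda$ enters in one essential way: it guarantees the admissible subgradient values on $\cC_m$ have non-empty relative interior (ties in $\lambda$ would cut the polytope down to lower-dimensional faces) and prevents nearby clusters from collapsing, which would shrink $\cM$ and destroy the tangent/parallel-space equality.
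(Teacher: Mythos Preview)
Your manifold $\cM$ and the verification of conditions~1 and~3 match the paper's argument. For condition~2 you take a somewhat different route: the paper invokes the explicit cluster-wise description of $\partial J(x)$ from \textcite{larsson2020c} and then, for each non-zero cluster $\cC_i$ with $|\cC_i|>1$, exhibits $|\cC_i|-1$ affinely independent subgradients to pin down $\dim\parset(\cG_i)=|\cC_i|-1$. You instead appeal to the extremal-face characterization of the subdifferential of a norm together with rearrangement/Abel summation.

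There is, however, a genuine gap in your treatment of condition~2. Your item~(i) establishes only that every subgradient satisfies the cluster-wise equality $\sum_{i\in\cC_k} s_i g_i=\Lambda_k$, which gives the inclusion $\parset\partial J(x)\subseteq T_x\cM^\perp$. You then write ``contributing codimension~1 per cluster'' and count dimensions, but nothing in your sketch establishes the reverse inclusion---that on each non-zero cluster the subdifferential actually spans the full $(|\cC_k|-1)$-dimensional affine hyperplane rather than a lower-dimensional face of it. This is precisely where strict decrease of $\lambda$ is needed and where the paper spends its effort by exhibiting points. With ties in $\lambda$---take $\lambda=(1,1)$ and $x=(1,1)$, so that $J$ is the $\ell_1$ norm---the subdifferential is the singleton $\{(1,1)\}$, its parallel space is $\{0\}$, and your codimension count fails even though your constraint $s_1 g_1+s_2 g_2=\Lambda_1$ is still satisfied. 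You have also misplaced the role of strict monotonicity: it is not required for the zero cluster (the $\ell_\infty$ ball of radius $\lambda_p>0$ already lies inside the zero-cluster face and gives full dimension there regardless of ties), but it is essential to guarantee that each non-zero cluster loses exactly one dimension and no more.
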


\begin{proof}
	% First, $J$ is convex and has full domain, so its subdifferential is always non-empty.
	% Let $x \in \bbR^d$.

	% First, we consider the case where $| x_j| >0$ for all $j\in [p]$.
	Let $m$ be the number of clusters of $x$ and $\cC_1, \ldots, \cC_m$ be those clusters, and let $c_1 > \cdots > c_m > 0$ be the  value of  $\lvert x \rvert$ on the clusters.

	We define $\varepsilon_c$ as in \Cref{eq:epsilon-c} and
	let $\cB = \{u \in \bbR^p: \lVert u - x \rVert_\infty < \varepsilon_c / 2\}$.
	Let $v_k \in \bbR^p$ for $k \in [m]$  be equal to $\sign(x_{\cC_k})$ on $\cC_k$ and to 0 outside, such that $x = \sum_{k=1}^m c_k v_k$.
	We define
	\begin{equation*}
		\cM =
		\begin{cases}
			\Span(v_1, \ldots, v_m) \cap \cB \,     & \text{if } c_m \neq 0 \, , \\
			\Span(v_1, \ldots, v_{m-1}) \cap \cB \, & \text{otherwise} \, .
		\end{cases}
	\end{equation*}
	We will show that $J$ is partly smooth at $x$ relative to $\cM$.

	As a first statement, we prove that any $u\in\cM$ shares the same clusters as $x$. For any $u\in\cM$ there exists $c' \in \bbR^m$, $u = \sum_{k=1}^m c'_k v_k$ (with $c'_m = 0$ if $c_m = 0$).
	Suppose that there exist $k \neq k'$ such that $c'_k = c'_{k'}$.
	Then since $\lVert x - u \rVert_\infty = \max_k |c_k - c'_k|$ and $|c_k - c_{k'}| > \varepsilon_c$, one has:
	\begin{align*}
		\varepsilon_c < |c_k - c_{k'}|
		 & = |c_k - c'_k + c'_{k'} - c_{k'}|      \\
		 & \leq |c_k - c'_k| + |c'_{k'} - c_{k'}| \\
		 & \leq 2 \lVert x - u \rVert_\infty      \\
		 & \leq  \varepsilon_c \, .
	\end{align*}

	This shows that clusters of any $u \in \cM$ are equal to clusters of $x$.
	Further, clearly the tangent space of $\cM$ at $x$ is $\Span(v_1, \hdots, v_m)$ if $c_m \neq 0$ and $\Span(v_1, \ldots, v_{m-1})$ otherwise.

	% \begin{equation}
	%   \cM = \{ u \in \bbR^d : (\exists c' \in \bbR^m, \forall i \neq j \in [m], c'_i \neq c'_j) \& (\forall k \in [m], \forall j \in \cC_k, u_j = \sign(x_j) c'_k) \} \cap \cB \, .
	% \end{equation}
	% Moreover, let $v_k \in \bbR^d$ for $k \in [m]$  be equal to $\sign x_{\cC_k}$ on $\cC_k$ and to 0 outside, such that $x = \sum_{k=1}^m c_k v_k$.
	% The set $\cM$ can then be rewritten as $\cM = \Span(v_1, \ldots v_m) \cap \cB$.
	% One inclusion is trivial.
	% For the other, let $u \in \Span(v_1, \ldots v_m) \cap \cB$.

	% $\{ u \in \bbR^d : (\exists c' \in \bbR^m, \forall i \neq j \in [d], c'_i \neq c'_j) \& (\forall k \in [m], \forall j \in \cC_k, |u_j| = c'_k) \}$.

	%   $k(j):[d] \rightarrow [m]$  such that $j \in \cC_{k(j}$.
	%  \begin{equation}
	%     \cM = \{ u \in \bbR^d : u_j = \sign(x_j) c'_{k(j)}\mbox{ where }  c' \in \bbR^m, \forall i \neq l , c'_i \neq c'_l \} \cap \cB \, .
	%   \end{equation}

	\begin{enumerate}
		\item The set $\cM$ is then the intersection of a linear subspace and an open ball, and hence is a $\cC^2$ manifold.
		      Since the clusters of any $u\in\cM$ are the same as the clusters of $x$, we can write that
		      \begin{align}{}
			      J(u) = \sum_{k=1}^m \left( \sum_{j \in \cC_k} \lambda_j \right)c_k' \enspace ,
		      \end{align}
		      and hence $J$ is linear on $\cM$ and thus $\cC^2$ around $x$.
		\item We let $x_\downarrow$ denote a version of \(x\) sorted in non-increasing order and let $R:\bbR^p \rightarrow \mathbb{N}^p$ be the function that returns the ranks of the absolute values of its argument. The subdifferential of $J$ at $x$ \parencite[Thm. 1]{larsson2020c}\footnote{We believe there to be a typo in the definition of the subgradient in \parencite[Thm. 1]{larsson2020c}. We believe the argument of \(R\) should be \(g\), not \(s\), since otherwise there is a dimension mismatch.} is the set of all $g\in\bbR^p$ such that
		      \begin{align}\label{eq:slope_subdiff}
			      g_{\cC_i} \in \cG_i \triangleq \left \{ s \in \bbR^{|C_i|} :
			      \begin{cases}
				      \cumsum(|s|_{\downarrow} - \lambda_{R(g)_{\cC_i}}) \preceq 0 & \text{if } x_{\cC_i} = \textbf{0} \, , \\
				      \cumsum(|s|_{\downarrow} - \lambda_{R(g)_{\cC_i}}) \preceq 0                                          \\
				      \quad \text{ and } \sum_{j\in \cC_i} (|s_j| - \lambda_{R(g)_{\cC_i}}) = 0                             \\
				      \quad \text{ and } \sign(x_{\cC_i}) =  \sign(s)              & \mathrm{otherwise.}
			      \end{cases}
			      \right \}
		      \end{align}
		      Hence, the problem can be decomposed over clusters.
		      We will restrict the analysis to a single $\cC_i$ without loss of generality and proceed in $\bbR^{|\cC_i|}$.
		      \begin{itemize}

			      \item First we treat the case where $|\cC_i|=1$  and $x_{\cC_i}\neq 0$.
			            The set $\cG_i$ is then the singleton $\{\sign(x_{\cC_i})\lambda_{R(s)_{\cC_i}}\}$ and its parallel space is simply $\{0\}$.
			            Hence, $\parset(\cG_i)^\perp = \bbR = \Span (\sign(x)_{\cC_i})$.
			      \item Then, we study the case where $|\cC_i|\neq 1$  and $x_{\cC_i}\neq \textbf{0}$.
			            Since for all $j \in [p]$, $\lambda_j\neq 0$ and $\lambda$ is a strictly decreasing sequence, we have that for $\varepsilon > 0$ small enough, the $|\cC_i| -1$ points $\lambda_{R(g)_{\cC_i}} + \varepsilon [-\sign(x_{\cC_i})_1, \sign(x_{\cC_i})_2, 0, \hdots, 0]^T$, $\lambda_{R(g)_{\cC_i}} + \varepsilon [0, -\sign(x_{\cC_i})_2, \sign(x_{\cC_i})_3, \hdots, 0]^T$, $\hdots$, $\lambda_{R(g)_{\cC_i}} + \varepsilon [0, 0, 0, \hdots,-\sign(x_{\cC_i})_{|\cC_{i}|-1}, \sign(x_{\cC_i})_{|\cC_{i}|}]^T$ belong to $\cG_i$.
			            Since these vectors are linearly independent, and using the last equality in the feasible set that, we have that
			            %We then have $|C_i| -1$ linearly independent vectors that describe the hyperplane:
			            \begin{align}
				            \sum_{j\in \cC_i} \sign(x_j)s_j = \sum_{j \in \cC_i}  \lambda_{R(g)_{\cC_i}} \nonumber \enspace .
			            \end{align}
			            Its parallel space is simply the set $\{s\in\bbR^{|\cC_i|} : \sum_{j\in \cC_i} \sign(x_j)s_j = 0\}$, that is just $\Span(\sign(x_{\cC_i}))^\perp$.
			            Hence $\parset(\cG_i)^\perp = \Span(\sign(x_{\cC_i}))$.

			      \item  Finally, we study the case where $x_{\cC_m} = \textbf{0}$.
			            Then the $\ell_\infty$ ball $\{s \in \bbR^{|\cC_m|}: \Vert s \Vert_\infty \leq \lambda_p\}$ is contained in the feasible set of the differential, hence the parallel space of $\cG_m$ is $\bbR^{|\cC_m|}$ and its orthogonal is reduced to $\{ \mathbf{0} \}$.
		      \end{itemize}

		      We can now prove that $\parset(\partial J(x))^\perp$ is the tangent space of $\cM$.
		      From the decomposability of $\partial J$ (\Cref{eq:slope_subdiff}), one has that $u \in \parset(\partial J(x)) ^\perp$ if and only if $u_{\cC_i} \in \parset (\cG_i)^\perp$ for all $i \in [m]$.

		      If $c_m > 0$, we have
		      \begin{equation}
			      \begin{aligned}
				      \parset(\partial J(x))^\perp & = \{ u \in \bbR^p : \forall i \in [m], u_{\cC_i} \in \parset (\cG_i)^\perp \}   \\
				                                   & = \{ u \in \bbR^p : \forall i \in [m], u_{\cC_i} \in \Span(\sign(x_{\cC_i})) \} \\
				                                   & = \Span(v_1, \ldots, v_m) \, .
			      \end{aligned}
		      \end{equation}

		      If $c_m=0$, we have
		      \begin{equation}
			      \begin{aligned}
				      \parset(\partial J(x))^\perp & = \{ u \in \bbR^p : \forall i \in [m], u_{\cC_i} \in \parset (\cG_i)^\perp \}                                             \\
				                                   & = \{ u \in \bbR^p : \forall i \in [m- 1], u_{\cC_i} \in \Span(\sign(x_{\cC_i}))  \quad  \& \quad u_{\cC_m} = \mathbf{0}\} \\
				                                   & = \Span(v_1, \ldots, v_{m-1}) \, .
			      \end{aligned}
		      \end{equation}
		\item The subdifferential of $J$ is a constant set locally around $x$ along $\cM$ since the clusters of any point in the neighborhood of $x$ in $\cM$ shares the same clusters with $x$.
		      This shows that it is continuous at $x$ relative to $\cM$.
	\end{enumerate}

\end{proof}

\begin{remark}
	We believe that the assumption $\lambda_1 > \cdots > \lambda_p$ can be lifted, since for example the $\ell_1$ and $\ell_\infty$ norms are particular instances of $J$ that violate this assumption, yet are still partly smooth.
	Hence this assumption could probably be lifted in a future work using a slightly different proof.
\end{remark}

\section{ADDITIONAL EXPERIMENTS}\label{sec:add_expes}

\subsection{\pkg{glmnet} versus \pkg{SLOPE} Comparison}
\label{sec:slope-vs-glmnet}

In this experiment, we ran the \pkg{glmnet}~\parencite{friedman2022} and \pkg{SLOPE}~\parencite{larsson2022d} packages on the \dataset{bcTCGA} dataset, selecting the regularization sequence \(\lambda\) such that there were 100 nonzero coefficients and clusters at the optimum for \pkg{glmnet} and \pkg{SLOPE} respectively.
We used a duality gap of \(10^{-6}\) as stopping criteria.
The features were centered by their means and scaled by their standard deviation.
The code is available at \href{https://github.com/jolars/slopecd}{\url{github.com/jolars/slopecd}}.

\subsection{Study on Proximal Gradient Descent Frequency}
\label{sec:pgd-freq-study}

To study the impact of the frequence at which the PGD step in the \texttt{hybrid} solver is used, we performed a comparative study with the \dataset{rcv1} dataset.
We set this parameter to values ranging from $1$ \textit{i.e.}, the \texttt{PGD} algorithm, to 9 meaning that a PGD step is taken every $9$ epochs.
The sequence of $\lambda$ has been set with the Benjamini-Hochberg method and parametrized with $0.1 \lambda_{\text{max}}$.

\Cref{fig:pgd_freq} shows the suboptimality score as a function of the time for the different values of the parameter controlling the frequency at which a PGD step is going to be taken.
A first observation is that as long as this parameter is greater than $1$ meaning that we perform some coordinate descent steps, we observe a significant speed-up.
For all our experiments, this parameter was set to $5$.
The figure also shows that any choice between $3$ and $9$ would lead to similar performance for this example.

\begin{figure*}[htb]
  \centering
    \includegraphics{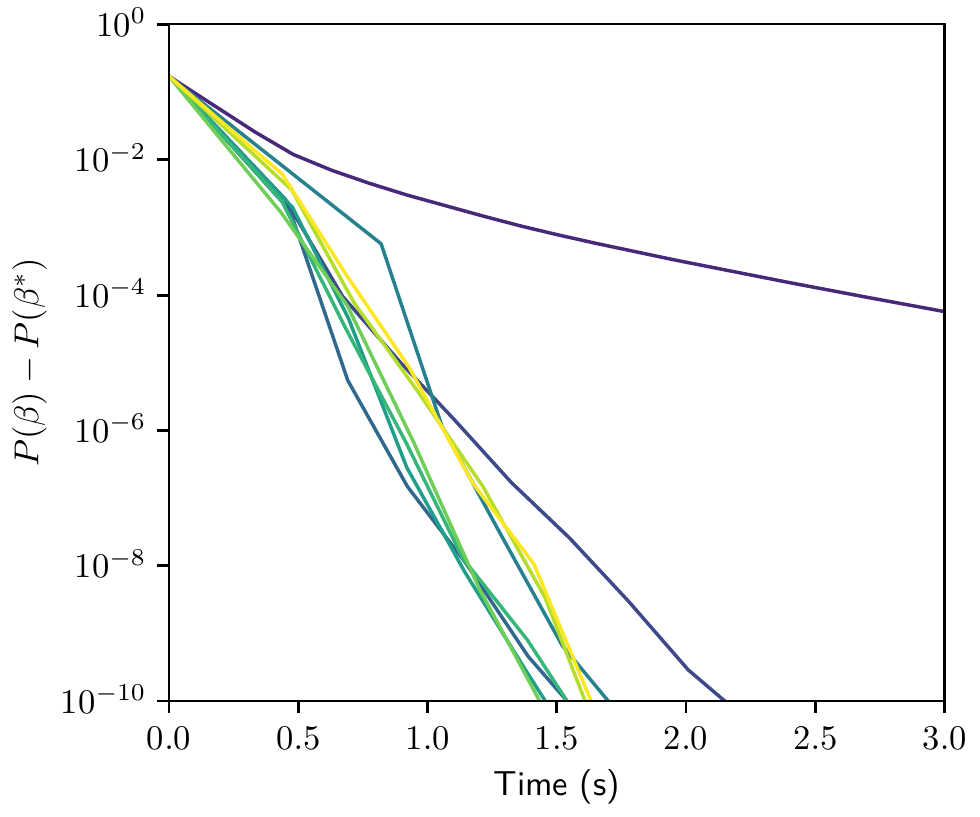}%
    \raisebox{0.585\height}{\includegraphics{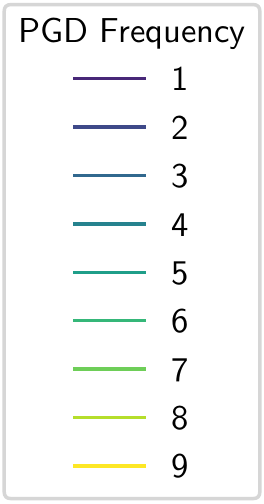}}
    \caption{Suboptimality score as a function of the time for different frequencies of the PDG step inside the \texttt{hybrid} solver for the \dataset{rcv1} dataset}
  \label{fig:pgd_freq}
\end{figure*}

\subsection{Benchmark with Different Parameters for the ADMM Solver}
\label{sec:admm-benchmarks}

We reproduced the benchmarks setting described in \Cref{sec:experiments} for the simulated and real data.
We compared the \texttt{ADMM} solver with our \texttt{hybrid} algorithm for different values of the augmented Lagrangian parameter $\rho$.
We tested three different values $10, 100$ and $1000$ as well as the adaptive method~\parencite[Sec. 3.4.1]{boyd2010}.

We present in \Cref{fig:simulated_appendix} and \Cref{fig:real_appendix} the suboptimality score as a function the time for the different solvers.
We see that the best value for $\rho$ depends on the dataset and the regularization strengh.
The value chosen for the main benchmark (\Cref{sec:experiments}) performs well in comparison to other \texttt{ADMM} solvers.
Nevertheless, our \texttt{hybrid} approach is consistently faster than the different  \texttt{ADMM} solvers.

\begin{figure*}[!t]
  \centering
  \includegraphics{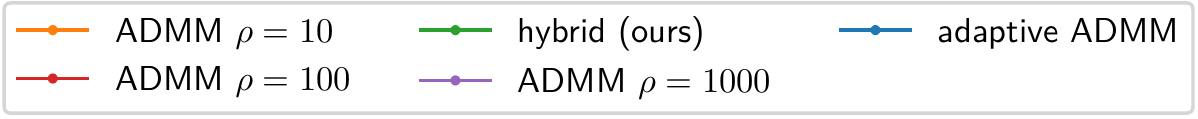}
  \includegraphics{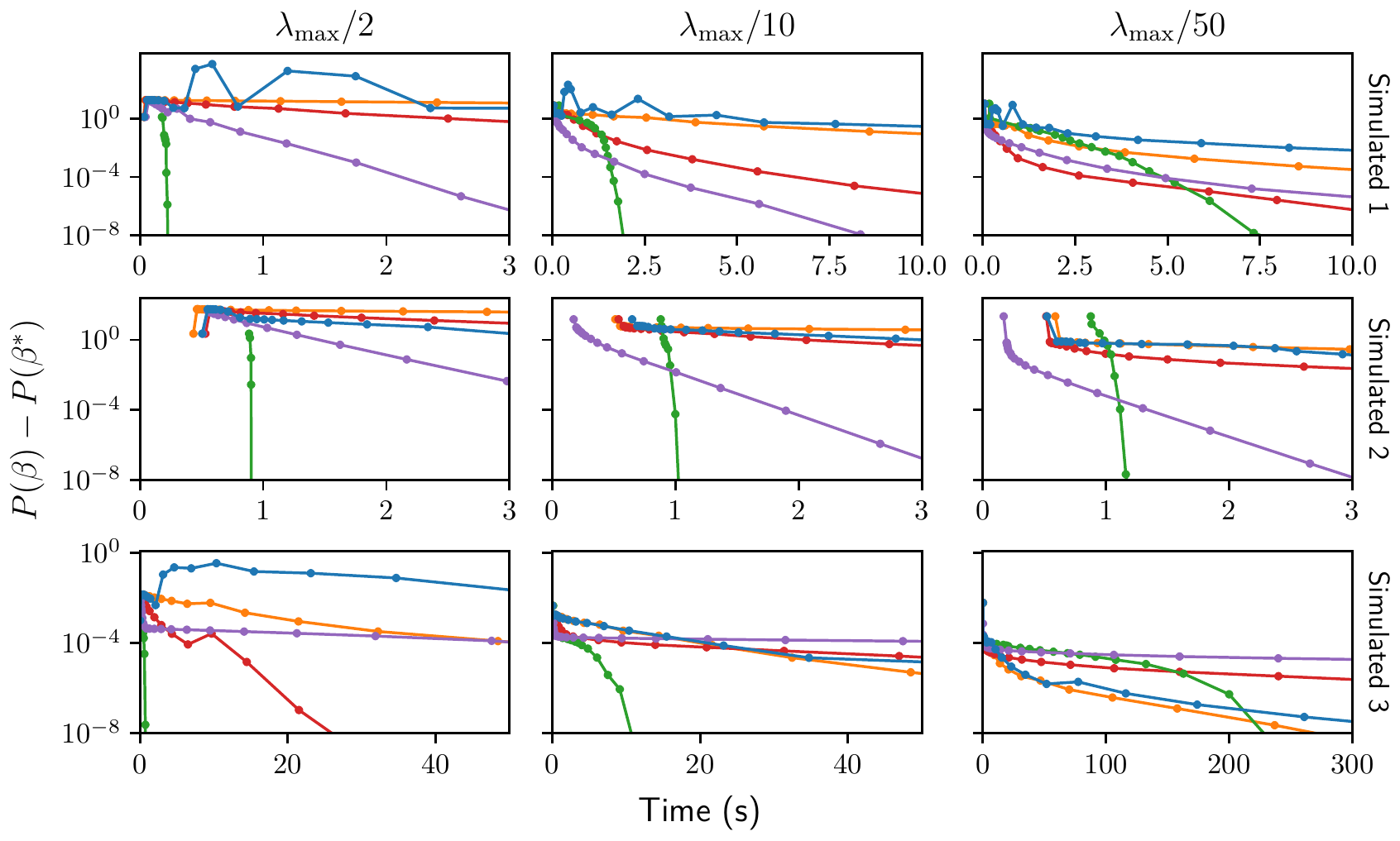}
  \caption{\textbf{Benchmark on simulated datasets.} Suboptimality score as a function of time for SLOPE on multiple simulated datasets and for multiple sequence of $\lambda$.}
  \label{fig:simulated_appendix}
\end{figure*}

\begin{figure*}[!t]
  \centering
  \includegraphics{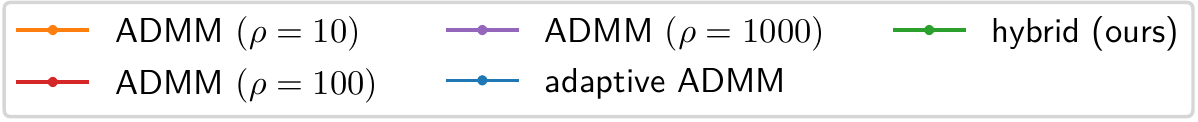}
  \includegraphics{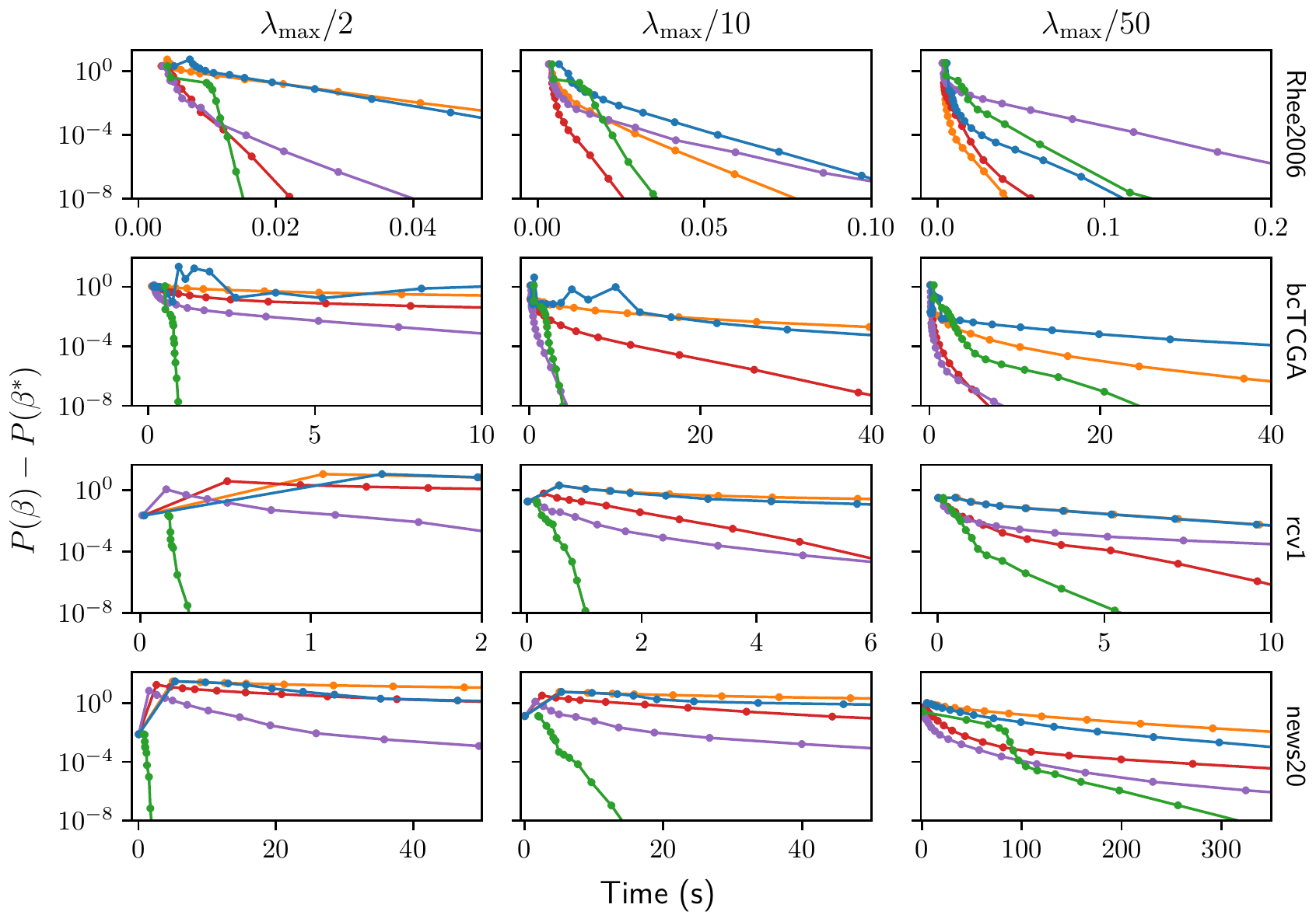}
  \caption{\textbf{Benchmark on simulated datasets.} Suboptimality score as a function of time for SLOPE on multiple simulated datasets and for multiple sequence of $\lambda$.}
  \label{fig:real_appendix}
\end{figure*}

\section{EXTENSIONS TO OTHER DATAFITS}
\label{sec:other-datafits}

Our algorithm straightforwardly generalizes to problems where the quadratic datafit $\frac{1}{2} \lVert y - X \beta \rVert^2$ is replaced by $F(\beta) = \sum_{i = 1}^n f_i (X_{i:}^\top \beta)$, where the $f_i$'s are $L$ smooth (and so $F$ is $L * \lVert X \rVert_2^2$-smooth), such as logistic regression.

In that case, one has by the descent lemma applied to $F(\beta(z))$, using $F(\beta) = F(\beta(c_k))$,
\begin{equation}
  F(\beta(z)) + H(z) \leq F(\beta) + \sum_{j \in \cC_k} \nabla_j F(\beta) \sign \beta_j (z - c_k) + \frac{L \lVert \tilde x \rVert^2}{2} (z - c_k)^2 + H(z)
\end{equation}
and so a majorization-minimization approach can be used, by minimizing the right-hand side instead of directly minimizing $F(\beta(z)) + H(z)$.
Minimizing the RHS, up to rearranging, is of the form of \Cref{pb:cluster-problem}.

\section{IMPLEMENTATION DETAILS OF SOLVERS}
\label{sec:solver-details}

\subsection{ADMM}

Our implementation of the solver is based on \textcite{boyd2011}.
For high-dimensional sparse \(X\), we use the numerical LSQR algorithm~\parencite{paige1982} instead of the typical direct linear system solver.
We originally implemented the solver using the adaptive step size (\(\rho\)) scheme from \textcite{boyd2010} but discovered that it performed poorly.
Instead, we used \(\rho = 100\) and have provided benchmarks of the alternative configurations in \Cref{sec:admm-benchmarks}.

\subsection{Newt-ALM}

The implementation of the solver is based on the pseudo-code provided in \textcite{Ziyan2019}.
According to the authors' suggestions, we use the Matrix inversion lemma for high-dimensional and sparse \(X\) and the preconditioned conjugate gradient method if, in addition, \(n\) is large.
Please see the source code for further details regarding hyper-parameter choices for the algorithm.

After having completed our own implementation of the algorithm, we received an implementation directly from the authors.
Since our own implementation performed better, however, we opted to use it instead.

\section{REFERENCES AND SOURCES FOR DATASETS}
\label{sec:dataset-sources}

In \Cref{tab:dataset-sources}, we list the reference and source (from which the data was gathered) for each of the real datasets used in our experiments.

\begin{table}[hbt]
  \centering
  \caption{Sources and references for the real data sets used in our experiments.\label{tab:dataset-sources}}
  \begin{tabular}{lll}
    \toprule
    Dataset            & Reference                              & Source                 \\
    \midrule
    \dataset{bcTCGA}   & \textcite{nationalcancerinstitute2022} & \textcite{breheny2022} \\
    \dataset{news20}   & \textcite{keerthi2005}                 & \textcite{chang2016}   \\
    \dataset{rcv1}     & \textcite{lewis2004}                   & \textcite{chang2016}   \\
    \dataset{Rhee2006} & \textcite{rhee2006}                    & \textcite{breheny2022} \\
    \bottomrule
  \end{tabular}
\end{table}

\end{document}